\newtheorem{thm}{Theorem}[section]
\newtheorem{cor}[thm]{Corollary}
\newtheorem{lemma}[thm]{Lemma}
\newtheorem{prop}[thm]{Proposition}
\newtheorem{defn}[thm]{Definition}
\theoremstyle{remark}
\theoremstyle{definition}
\newtheorem{rmk}[thm]{Remark}
\newtheorem{sit}[thm]{Situation}
\numberwithin{equation}{thm}
\def\beq{\begin{equation}}
\def\eeq{\end{equation}}
\def\crash#1{}
\def\N{{\mathbb N}}
\def\Z{{\mathbb Z}}
\def\R{{\mathbb R}}
\def\A{{\mathbb A}}
\def\l{\left}
\def\r{\right}
\def\[[{\l[\l[}
\def\]]{\r]\r]}
\def\p{\prime}
\def\cf{\emph{cf. }}
\def\ie{\emph{i.e. }}
\def\ds{\displaystyle}
\def\cE{{\mathcal E}}
\def\cM{{\mathcal M}}
\def\cO{{\mathcal O}}
\def\cH{{\mathcal H}}
\def\cL{{\mathcal L}}
\def\cX{{\mathcal X}}
\def\cY{{\mathcal Y}}
\def\sA{{\mathscr A}}
\def\sB{{\mathscr B}}
\def\sF{{\mathscr F}}
\def\sH{{\mathscr H}}
\def\sK{{\mathscr K}}
\def\sM{{\mathscr M}}
\def\sS{{\mathscr S}}
\def\wtilde{\widetilde}
\def\what{\widehat}
\def\veps{\varepsilon}
\def\a{\alpha}
\def\be{\beta}
\def\la{\lambda}
\def\na{\nabla}
\def\d{{\partial}}
\def\di{{\frac{\partial}{\partial x_i}}}
\def\op{{\rm op}}
\def\Spf{{\rm Spf}}
\def\ker{{\rm Ker}}
\def\ul{\underline}
\def\kc{{k^\circ}}
\def\kt{\tilde{k}}
\author{Francesco Baldassarri\thanks{Universit\`{a} di Padova,
Dipartimento di matematica pura e applicata, Via Trieste, 63, 35121 Padova, Italy.}
\and Lucia Di Vizio\thanks{Institut de Math\'{e}matiques de Jussieu,
Topologie et g\'{e}om\'{e}trie alg\'{e}briques, Case 7012,
2, place Jussieu, 75251 Paris Cedex 05, France.}}
\title{Continuity of the radius of convergence of $p$-adic differential equations on
Berkovich analytic spaces}
\begin{document}

\maketitle


\tableofcontents

\section{Introduction}

The
classical existence  theorem of Cauchy  \cite[Chap.I]{Poole}
for local solutions of an analytic differential system at an ordinary point
does not hold in general for differential equations on a smooth
Berkovich
analytic space $X$ over a $p$-adic
field $k$. We recall \cite[1.2.2]{Berkovich} that to any point
$\xi \in X$ one associates a completely valued extension
field ${\sH}(\xi)$ of $k$, called the {\it residue field at $\xi$}; the point $\xi$ is $k$-rational if $\sH(\xi) = k$.
Any $k$-rational point $\xi$ of $X$ admits a neighborhood isomorphic to a polydisk centered at the origin $O$ in an affine (analytic) $k$-space, the isomorphism sending $\xi$ to $O$.
However,
the neighborhoods of a non-rigid point are in general
too coarse. So,   a differential equation does in general have no solutions analytic in  a full neighborhood of  a non-rigid point $\xi \in X$, even if the point is not a singularity of the equation.
In the very inspiring  paper \cite[\S 3]{AndreOrbifold}
Y. Andr\'e concentrates on differential equations
which after pull-back to a finite \'etale covering admit a full set
of multivalued analytic solutions. For such differential equations
there is a notion of global monodromy group close to the one in the complex case.
It would be interesting to pursue Andr\'e's investigation into a description of integrable analytic connections locally for the \'etale topology of \cite{BerkovichEtale}. But this is not our approach here: we  use the natural topology on Berkovich analytic spaces and regard an \'etale covering $f:Y \to X$ as producing a {\it highly non-trivial connection} $(f_{\ast} \cO_Y, \na = f_{\ast}(d_{X/k}) : f_{\ast}\cO_Y \to f_{\ast}\cO_Y \otimes \Omega^1_X)$ on $X$. Moreover,
the problem of the failure of Cauchy existence theorem would not be overcome in general by using some \'etale topology.  On the other hand,
it is possible and sometimes convenient to recover
Cauchy's theorem at any given point $\xi \in X$, by performing the extension
of scalars to $X \what {\otimes} \sH(\xi)$, and passing to some canonical point $\xi^\p$ of this space above $\xi$. This viewpoint has been systematically used by Dwork and
Robba in their study of $p$-adic differential equations.
\par
We actually  assume that $X$ comes with a local notion of distance, measured in terms of an embedding of $X$ as an analytic domain in the generic fiber $\cX_{\eta}$ of a  smooth formal scheme $\cX$ over $\kc$.  This does not mean that we  privilege formal schemes over $\kc$ or $\kt$-schemes, over $k$-analytic spaces. The formal model of $X$ is here a technical tool for expressing ``local" radii of convergence of solutions of differential equations in the above sense. We will show in a subsequent paper that certain expressions in these local radii are in fact absolute invariants of a connection on an analytic space.
\par
In practice, we  consider all over this paper the following
\begin{sit}\label{situation}The smooth formal scheme $\cX = \Spf A$, is affine and  \'etale over $\what \A^d_\kc$, the formal affine space over $\kc$, with formal coordinates $\ul x =(x_1,\dots,x_d)$. Then $\sA :=  A\otimes k$, $X = \cM(\sA)$, and $U$ is an  analytic domain in $X$.
\end{sit}
We are given  an integrable system of partial differential equations of the form
\beq\label{eq:intrdiffsys}
\Sigma = \Sigma_{(\ul x,  \ul G, U)} \;\; : \;\; \frac{\partial \, \vec y}{\partial x_i}= G_i\,  \vec y \;\; ,\ \forall\ i=1,\dots,d\,,
\eeq
for $\vec y$ a column vector of unknown functions and  $G_i$ a $\mu \times \mu$ matrix of analytic functions
on $U$.  Notice that, for any $k$-rational point $\xi \in X$, the \'etale map $\ul x : X \to \A^d_k$ to the affine $k$-analytic space of dimension $d$, admits a unique local section $\sigma_{\xi}: D_k^d(\ul x (\xi),1^-) \to X$, sending $\ul x (\xi)$ to $\xi$, where
\beq
D_k^d(\ul x (\xi),1^-) = \{ \eta \in \A_k^d  \, | \, |x_i(\eta) - x_i(\xi)| < 1 \, , \,  {\rm for} \, 1=1,\dots,d \, \} \; .
\eeq
The image of $\sigma_{\xi}$ will be denoted $D_{\cX}(\xi,1^-)$, and called {\it the open disk of radius 1 centered at the $k$-rational point $\xi \in X$}. Similarly, we define open and closed disks $D_{\cX}(\xi,r^\pm)$, of radius $r <1$ centered at $\xi$. Notice that we use the term ``disk" to refer to ``polydisk with equal radii".
The {\it diameter} $\delta_{\cX}(\xi,U)$ of $U$ at the $k$-rational point $\xi$, is the radius of the maximal open disk centered at $\xi$ and contained in $U$, a notion obviously independent of the choice of the formal coordinates $\ul x$ on $\cX$.  Then $0 < \delta_{\cX}(\xi,U) \leq 1$ because, on the one hand, a $k$-rational point of $U$ is necessarily an interior point of $U$ in $X$; on the other hand, disks of radii $\geq 1$ are not defined.
Now, when $\xi \in U$ is a $k$-rational point of $U$, the definition of the {\it radius of convergence of the system
(\ref{eq:intrdiffsys}) at $\xi$} is completely natural.
It is the radius $r=  R_{\cX}(\xi,\Sigma)= R(\xi,\Sigma)$ of the maximal open  disk $D_{\cX}(\xi,r^-)$ contained in $U$, where
 a fundamental solution matrix $Y$ of (\ref{eq:intrdiffsys}) at $\xi$ converges.  Notice that $Y$ is a matrix
 with entries in  $k[[x_1-x_1(\xi),\dots,x_d-x_d(\xi)]]$ and its convergence is really tested in $D_k^d(\ul x(\xi),1^-)$.
If
\beq \label{eq:Radiusseries} Y =  \sum_{\underline \a \in \N^d} A_{\underline\a}
(x_1 - x_1(\xi))^{\a_1}  \cdots (x_d - x_d(\xi))^{\a_d} \; , \;  {\rm with} \,  A_{\ul \a} \in M_{\mu}(k)\; ,
 \eeq
 its radius of convergence is, as in the classical case,
\beq \label{eq:locliminf}
\wtilde R(\xi,\Sigma) = \liminf_{|\underline\a|_{\infty} \to
\infty} \l|A_{\underline\a}\r|^{- {1 / |\underline\a|_{\infty}}} \in \R_{\geq 0}
\cup \{ \infty \} \; ,
\eeq
where
$|\underline\a|_{\infty} =\a_1 + \dots+\a_d$, and where the norm of a matrix is the maximum absolute value of its entries. Notice that the disk of radius $\wtilde R(\xi,\Sigma)$, centered at $\xi \in \A_k^d$, is not necessarily contained in $U$, as the example of the trivial connection $G_i = 0$, $\forall i$, on a small disk $U \subset \A_k^d$ shows. But we insist on defining
\beq \label{eq:radiusdef}
R(\xi,\Sigma) = \min (\wtilde R(\xi,\Sigma),\delta_{\cX}(\xi,U)) \; .
\eeq
The reason is that the determinant of the matrix $Y$ may  vanish at a point $\zeta \in
D(\ul x (\xi), \wtilde R(\xi,\Sigma)^-) \setminus D(\ul x (\xi), \delta_{\cX}(\xi,U)^-)$,
while this cannot be the case in  $D(\ul x (\xi),  R(\xi,\Sigma)^-)$, otherwise the differential system for the wronskian $w := \det Y$, namely
\beq\label{eq:wronskian}
 \frac {\partial \, w}{\partial x_i} = ({\rm Tr} \, G_i) \,  w \;\; ,\ \forall\ i=1,\dots,d\,,
\eeq
would have a singularity in $U$, which is not the case.
Notice that $R(\xi,\Sigma)$ is then the maximum real number $r \leq 1$ such that the system $\Sigma$ admits a solution matrix $Y \in GL(d, \cO(D_{\cX}(\xi,r^-)))$.
\par
The advantage, and the intrinsic content,  of this definition may be better appreciated if we consider the category ${\bf MIC}(U/k)$ of  coherent $\cO_U$-modules with integrable connection $(\cE,\na)$,
$$\na : \cE \to \cE \otimes \Omega^1_{U/k} \; ,
$$
and the object
$(\cE := \cO_X^{\mu},\na)$ associated to $\Sigma$. If $\ul e = (e_1,\dots,e_{\mu})$ denotes the canonical basis of global sections of $\cO_X^{\mu}$, then, by convention,
\beq
\na(\ul e) = - \sum_{i=1}^d  (e_1 \otimes  dx_i ,\dots,e_{\mu} \otimes  dx_i  )  \, G_i \; ,
\eeq
so that $\Sigma$ is the differential system satisfied by the horizonal sections of $(\cE,\na)$.
\par
The abelian sheaf $\cE^{\na} = \ker (\na : \cE \to \cE \otimes \Omega^1_{U/k})$ for the $G$-topology of $U$, is not in general locally constant. If, on some analytic domain $V \subset U$,   $\cE^{\na}_{|V}$ is  locally constant, then it is necessarily a local system of $k$-vector spaces of rank $\mu$ on $V$ and the canonical
monomorphism
\beq
\label{eq:RHsheafintro}
\cE^{\na}   \otimes_k \cO_U \hookrightarrow  \cE \;  ,
\eeq
is in fact an {\it isomorphism}: this is the intrinsic content of our previous statement on the wronskian equation. Taking into account the fact that a locally constant sheaf of finite dimensional $k$-vector spaces on a disk is necessarily constant, we see that  $D = D_{\cX}(\xi,R(\xi,\Sigma)^-)$ is the maximal open disk centered at $\xi\in U$, and contained in $U$, where $(\cE,\na)$ is isomorphic to
the trivial connection $(\cO_D,d_D)^{\mu}$.
We may then give the
\begin{defn}[Alternative form] \label{def:altradius}
Let $(\cE,\na)$ be an object of
${\bf MIC}(U/k)$, with $\cE$ locally free of rank $\mu$ for the $G$-topology. For any $k$-rational point $\xi \in U$, we define the {\emph radius of convergence $R_{\cX}(\xi,(\cE,\na))$ of $(\cE,\na)$ at $\xi$} as the maximal open disk $D$ centered at $\xi$ and contained in $U$, such that  $(\cE,\na)_{|D}$ is isomorphic to
the trivial connection $(\cO_D,d_D)^{\mu}$.
\end{defn}
\par
Coming back to the explicit situation (\ref{eq:intrdiffsys}), there is a nice compact formula for the solution matrix $Y = Y_{\xi}$ of (\ref{eq:intrdiffsys}) at $\xi$, such that $Y_{\xi}(\xi) = I_{\mu}$. We write
\beq \label{eq:factorials}
\ul \a!=\prod_i\a_i! \; , \;  ( \ul x- \ul x (\xi))^{\ul \a} =  \prod_i (x_i - x_i(\xi))^{{\a}_i} \; ,  \;
\d^{\ul \a}
= \prod_i\frac{{\partial^{\a_i}}}{{\partial x_i^{\a_i}}} \; , \; \d^{[\ul \a]} = \frac{1}{\ul \a !} \d^{\ul \a} \; .
\eeq
By iteration of  the system (\ref{eq:intrdiffsys}) we obtain,
for any $\underline\a\in\N^d$, the equations
\beq\label{eq:stratintro}
\partial^{[\underline\a]} \, \vec y   = G_{[\underline\a]}\, \vec y \;\;\;\;\;\;{\rm ( \, resp.} \; \partial^{\underline\a} \, \vec y   = G_{\underline\a}\, \vec y \;{\rm )}\;,
\eeq
with $G_{[\underline\a]}$ and $G_{\underline\a} = \ul \a! G_{[\underline\a]} $, $\mu \times \mu$ matrices of functions analytic in $U$.  In particular, $G_{\ul 0} = I_{\mu}$ and $G_i $ is now written $G_{[\ul 1_i]} = G_{\ul 1_i}$, where $\ul 1_i = (0,\dots,0,1,0,\dots,0)$, with $1$ only at the $i$-th place.
\par
The $G_{\ul \a}$ satisfy the recursion relations
\beq \label{eq:stratrecintro}
G_{\ul \a +\ul 1_i} = \di(G_{\ul \a}) + G_{\ul \a} G_{\ul 1_i} \; .
\eeq
\par
The Taylor series of the fundamental solution matrix $Y_{\xi}$ of
(\ref{eq:intrdiffsys}) at $\xi \in U$   is
\beq \label{eq:taylorseriesintro}
 Y_{\xi} = \sum_{\underline\a \in \N^d} G_{[\ul \a]}(\xi) (\ul x -
\ul x(\xi))^{\underline\a} \in GL(\mu, \sH(\xi)[[\ul x - \ul x(\xi)]])\,,
\eeq
(for the $k$-rational point $\xi$, $\sH(\xi) =k$, of course) with radius
of convergence
\beq \label{eq:liminfintro}
\wtilde R(\xi,\Sigma) =  \liminf_{|\underline\a|_{\infty} \to
\infty} \l|G_{[\underline\a]}(\xi)\r|^{- {1 / |\underline\a|_{\infty}}} \in \R_{\geq 0}
\cup \{ \infty \} \; .
\eeq
\par

We now extend the previous definitions to all points $\xi \in U$.  The function $\xi \mapsto \wtilde R(\xi,\Sigma)$ will be defined in general by  formula (\ref{eq:liminfintro}). This amounts to the following  consideration on Berkovich analytic spaces.
 As explained in \cite[1.4]{BerkovichEtale}, we may consider the ground field extension of $U$ to $\sH(\xi)$, $U_{\sH(\xi)} = U \what{\otimes}_k \sH(\xi)$. It is a $\sH(\xi)$-analytic space equipped with a canonical compact projection map $\psi_\xi:  U_{\sH(\xi)} \to U$, and there is a canonical $\sH(\xi)$-rational  point $\xi^\p$ above $\xi$.  The system (\ref{eq:intrdiffsys}) may be viewed, with no change in notation, on
 $U_{\sH(\xi)} \to \A^d_{\sH(\xi)}$, where the field of constants is now $\sH(\xi)$, and formula (\ref{eq:liminfintro}) represents the radius of convergence of the fundamental solution matrix $Y_{\xi^\p}$ of (\ref{eq:intrdiffsys}), viewed on $U_{\sH(\xi)}$ at $\xi^\p$. We then define, for general $\xi \in U$, $D_{\cX}(\xi,r^\pm) := D_{\cX_{\sH(\xi)^{\circ}}}(\xi^\p,r^\pm)$, where $\cX_{\sH(\xi)^{\circ}} = \cX \times_{\kc} \Spf \sH(\xi)^{\circ}$, $\delta_{\cX}(\xi,U) := \delta_{\cX_{\sH(\xi)^{\circ}}}
 (\xi^\p,U_{\sH(\xi)})$,  and  $R(\xi,\Sigma) = \min (\wtilde R(\xi,\Sigma),\delta_{\cX}(\xi,U))$ (resp.  $R_{\cX}(\xi,(\cE,\na))= R_{\cX_{\sH(\xi)^{\circ}}}(\xi^\p,\psi_\xi^{\ast}(\cE,\na))$). We abusively call $D_{\cX}(\xi,r^\pm) $ the {\it open} (resp. {\it closed}) {\it disk of radius $r$ centered at $\xi \in X$}.

\par \medskip
In the situation (\ref{situation}), under the further condition that  {\it $U$ is a Laurent domain in $X$},  we prove that the function $\xi \mapsto  R(\xi,\Sigma)$ is upper semicontinuous on $U$, for its natural Berkovich topology. A preliminary fact, and this is where we need  $U$ to be a Laurent domain in $X$,  is that the function $\xi \mapsto  \delta_{\cX}(\xi,U)$ is  upper semicontinuous on $U$.  Moreover, if $U$ is the inverse image of a Laurent domain in $D^d_k(0,1^+)$, the function $\xi \mapsto  \delta_{\cX}(\xi,U)$  is continuous. If $U = X$ then $\xi \mapsto  R(\xi,\Sigma)$ is continuous at the maximal point $\eta_X$ of $X$. If $\dim X = 1$ and $\cX = \what \A^1_\kc$, we prove directly that  $\xi \mapsto  R(\xi,\Sigma)$ is continuous on $U$. Combining the last two results, we deduce that  $\xi \mapsto  R(\xi,\Sigma)$ is continuous if $\dim X = 1$ and $U$ is any affinoid neighborhood of $\eta_X$.
\par \medskip
Surprisingly enough, the simple statement above seems to be  new  even in the case when $U$ is the  closed unit disk $D_k(0,1^+)$ of dimension 1, a case extensively discussed in the literature (\cf \cite{DGS} and \cite{ChMeAST} for reference).
In the case of an ordinary differential system $\Sigma$ as (\ref{eq:intrdiffsys}) on an annulus
$$
U = C(r_1,r_2)=\l\{\xi:  r_1<|x(\xi)|<r_2\r\}
\subset \A^1_k \;
$$
with $0 < r_1$,
a simple convexity argument due to Christol and Dwork \cite{ChristolDwork}
shows that the function $\wtilde R$ is continuous when restricted to the  segment of points $(r_1,r_2) \to C(r_1,r_2)$, $r \mapsto t_r$,  where $t_r=t_{0,r}$ is the ``generic point at distance $r$ from $0$", \ie the point at the boundary of the disk $D_k(0,r^-)$.  They actually consider, precisely as we do,  the more invariant  function
\beq
\begin{array}{cccc}  R:&
(r_1,r_2)&\longrightarrow & \R_{\geq 0} \cup \{ \infty \}\\
\\
&r & \longmapsto &R(t_r, \Sigma)  :=  \min (r, \wtilde R(t_r, \Sigma) )\; \;\; .
\end{array}
\eeq
It is easy to show \cite[2.3]{ChristolDwork} that the function $\log r \mapsto \log \wtilde R(t_r, \Sigma)$ is  concave (\ie $\cap$-shaped), hence continuous, in $(r_1,r_2)$. The function $r \mapsto \wtilde R(t_r, \Sigma)$ is therefore continuous on $(r_1,r_2)$.
In this situation, the system is said to be {\it solvable at} $r_2$ if the $\displaystyle \lim_{r \to r_2^-}  R(t_r,\Sigma)$, which certainly exists, is  $= r_2$ (and similarly for $r_1$). Systems solvable at $r_2$ (resp. $r_1$) are only understood on $C(r_2 - \veps, r_2)$ (resp. $C(r_1 , r_1 + \veps)$), for small values of $\veps >0$, by the theory of {\it factorization according to the slopes} due to Christol and Mebkhout \cite{ChMe3} \cite{ChMe4}.
 In the special case of a {\it Robba system} \cite[3.1]{ChMe2}, \ie of a system $\Sigma$ on $C(r_1,r_2)$, such that $ R(t_r,\Sigma) = r$ for every $r \in (r_1,r_2)$, it follows from Dwork transfer theorems \cite[IV.5.2]{DGS}, that $ R(\xi,\Sigma) = |x(\xi)|$, for every $\xi \in C(r_1,r_2)$.\footnote{If for two values $R_1$ and $R_2$, with $r_1 <R_1 < R_2 <r_2$,
 $ R(t_{R_i},\Sigma) =  R_i$, $i =1,2$, then
 $ R(t_r,\Sigma) =  r$, for all $r \in [R_1,R_2]$ \cite[Cor. in App. I]{DGS}.}
 This simplest case is of high interest, even (or maybe {\it especially}) when its features depend on the existence of a strong Frobenius structure. A notion of {\it exponents} is then available, and under an arithmetic condition on them (automatic in case of a strong Frobenius structure) the system admits a Fuchs-type decomposition over $C(r_1,r_2)$ \cite{ChMe2} \cite{DworkEXP}. {\it Our paper deals with the deviation of a system from being of Robba type}.
 \par  \medskip
We prove a far-reaching generalization of the  Dwork-Robba theorem
\cite[IV.3.1]{DGS} on effective bounds  for the growth of local
solutions (theorem (\ref{thm:DworkRobba}) and its corollaries). The only difference from the version published by Gachet \cite{gachet} is the formulation
on Berkovich analytic spaces, which is however crucial  in
our proof of the upper semicontinuity of the radius of convergence
(\cf \S\ref{subsection:USC}).

\par \medskip
It turns out that Berkovich
analytic spaces represent an ideal framework for the study of
$p$-adic differential equations. They contain the {\it generic points} in the sense of Dwork and Robba, as honest points. This gives  great flexibility to the ``rigid" geometry setting and permits in the end to
generalize classical one-dimensional
results of Dwork, Robba and Christol to analytic spaces.

\medskip

\par

\medskip
{\bf Acknowledgement.}
We are indebted to Vladimir Berkovich  for considerable help both in the formulation
and in the proof of the theorems appearing in this paper. We are
also grateful to Yves Andr\'e and Kiran Kedlaya for showing so much
interest in our results.

\section{Generalities and notation} \label{notation}
\par
We refer to our assumptions (\ref{situation}). For any subset $S \subset X$, the notation $||~||_S$ will refer to the supnorm on $S$. For example, for an analytic domain $V \subset X$, let $\sA_V^+$, denote the $k$-Banach algebra of {\it bounded} analytic functions on $V$, equipped with  $k$-Banach norm $||~||_V$. If $V = \cM(\sA_V)$ is affinoid, then $\sA_V^+ = \sA_V$, and
the $k$-affinoid algebra $\sA_V$ will  be viewed as a $k$-Banach algebra via $||~||_V$. We will denote by $\cL_k(\sA^+_V)$ the $k$-vector space of $||~||_V$-bounded $k$-linear endomorphisms of $\sA^+_V$, equipped with the corresponding operator norm $|~|_V := ||~||_{V,\op}$. Since  $\forall \varphi$,$\psi \in \cL_k(\sA^+_V) $,
$||\psi \circ \varphi ||_{V,\op} \leq  ||\psi ||_{V,\op}||\varphi ||_{V,\op}$,
$\cL_k(\sA^+_V) = (\cL_k(\sA^+_V), |~|_V)$ is a $k$-Banach algebra.
\par
For a matrix $G = (g_{ij})$ of elements in a $k$-Banach algebra $(\sB,||~||)$, we will set
$$||G|| := \sup_{i,j} ||g_{ij} || \; .$$
Then $||G \cdot H|| \leq ||G|| \cdot ||H||$, whenever multiplication of matrices makes sense, and the $k$-algebra $\cM_{n \times n}(\sB)$ of $n \times n$-matrices with entries in $\sB$, equipped with the norm $||~||$, is a Banach $k$-algebra.
\par

\subsection{Ground extension functor and continuity} \label{subsec:ground}
We need a definition extracted from \cite[1.4]{BerkovichEtale}. Let $L$ be any complete valued field extension of $k$; the {\it ground extension functor} associates to any $k$-analytic space $Y$ an $L$-analytic space $Y_L = Y \what\otimes_k L$ equipped with a canonical projection $\psi_{Y,L} = \psi_{Y,L/k} :Y_L \to Y$. In the case of a $k$-affinoid space $Y = \sM(\sA)$, $Y_L = \sM(\sA_L)$, where $\sA_L$ is the $L$-affinoid algebra $\sA \what\otimes_k L$, and the map $\psi_{Y,L}$ corresponds to the inclusion $\sA \to \sA_L$, $a \mapsto a \otimes 1$.  By construction, the map $\psi_{Y,L}$ is compact, \ie for any compact subset $C$ of $Y$, $\psi_{Y,L}^{-1}(C)$ is compact. We will be dealing with a family $\sF$ of functions defined on the analytic spaces over $k$ in a class  $\sS = \bigcup_L \sS_L$, where  $\sS_L$ is a class of $L$-analytic spaces and $L$ varies over  completely valued  field extensionof $k$.  We assume that $\sF = \bigcup_L \sF_L$, $\sF_L =
\{ \varphi_Y:Y \to S\}_{Y \in \sS_L}$, all functions taking values in  a fixed  topological space $S$. We will assume that the class $\sS$ is stable by ground extensions,
and that  the family $\sF$ is  {\it compatible with base change}, in the sense that if $Y \in \sS_L$, $\varphi_Y \in \sF_L$, and $L^\p/L$ is a completely valued extension, $Y_{L^\p} \in \sS_{L^\p}$, and $\varphi_{Y_{L^\p}} = \varphi_Y \circ \psi_{Y,L^\p/L} \in \sF_{L^\p}$.
The following general lemma shows  that, to prove continuity of the functions in $\sF_k$, no loss of generality is involved in assuming that the base field $k$ is maximally complete and algebraically closed.
\begin{lemma}\label{lemma:fieldext} Let $Y$ be any $k$-analytic space, $L$ be a complete valued field extension of $k$ and $Y_L = Y \what\otimes_k L$ be the extension of $Y$ over $L$. Then the natural topology of $Y$ is the quotient topology of the projection map  $\psi_L = \psi_{Y,L}:Y_L \to Y$.
\end{lemma}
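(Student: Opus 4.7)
The plan is to prove that $\psi_L$ is continuous, surjective, and closed; any such map is automatically a quotient map, so this suffices. Continuity and surjectivity are built into the construction of the ground extension functor as recalled in the text, so the work lies in establishing closedness.

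For closedness I invoke two foundational facts about Berkovich $k$-analytic spaces: the topological space underlying $Y$ is locally Hausdorff and locally compact, and (as stated in the paragraph preceding the lemma) the map $\psi_L$ is compact, \ie $\psi_L^{-1}(K)$ is compact for every compact $K \subset Y$. Granting these, let $F \subset Y_L$ be closed and let $y \in Y \setminus \psi_L(F)$. Choose a Hausdorff open neighborhood $W$ of $y$ in $Y$ and a compact neighborhood $C \subset W$ of $y$. Then $F \cap \psi_L^{-1}(C)$ is a closed subset of the compact set $\psi_L^{-1}(C)$, hence compact; its continuous image
\[
\psi_L(F \cap \psi_L^{-1}(C)) = \psi_L(F) \cap C
\]
is therefore compact in the Hausdorff space $W$, and so closed in $W$. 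Since $y$ lies in the interior of $C$ but not in $\psi_L(F) \cap C$, the set $(\mathrm{int}\, C) \setminus (\psi_L(F) \cap C)$ is an open neighborhood of $y$ in $Y$ disjoint from $\psi_L(F)$, showing that $\psi_L(F)$ is closed in $Y$.

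Once closedness is in hand the quotient property is immediate. If $U \subset Y$ satisfies that $\psi_L^{-1}(U)$ is open in $Y_L$, then by surjectivity
\[
Y \setminus U = \psi_L(Y_L \setminus \psi_L^{-1}(U))
\]
is closed in $Y$, so $U$ is open; the converse implication is just continuity of $\psi_L$. Thus the natural topology on $Y$ coincides with the quotient topology induced by $\psi_L$.

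The only real obstacle is to invoke correctly the standard Berkovich foundations (local compactness, local Hausdorff property, and compactness of the projection); there is no genuinely new input. As a variant, one could reduce to the affinoid case: for an affinoid domain $V = \sM(\sA) \subset Y$, the map $V \what\otimes_k L = \sM(\sA \what\otimes_k L) \to V$ is a continuous surjection between compact Hausdorff spaces and hence automatically closed, and one then globalizes using that openness in $Y$ can be checked on any affinoid atlas. I find the properness argument above more direct.
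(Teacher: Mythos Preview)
Your proof is correct and follows essentially the same route as the paper's: both establish that $\psi_L$ is closed by using a compact neighborhood of a point outside the image together with the compactness of $\psi_L$, and then conclude the quotient property (the paper by citing a general topology reference, you by writing out the standard argument). Your extra care in working inside a Hausdorff open set $W$ to justify that compact images are closed is a welcome precision over the paper's more implicit treatment.
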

\begin{proof} We first prove that the map $\psi_L$ is closed. Let $C$ be a closed subset of $Y_L$. Let $y$ be a point of $Y \setminus \psi_L(C)$, and let $D_2$ be a compact neighborhood of $y$ in $Y$. Then $D_1 = \psi_L^{-1}(D_2)$ is a compact subset of $Y_L$. The intersection $C \cap D_1$ is then compact; its image $\psi_L(C \cap D_1)$ is then closed, so that $D_2 \setminus \psi_L(C \cap D_1)$ is a neighborhood of $y$ in $Y$ not intersecting $\psi_L(C)$.
\par \noindent The conclusion follows from  \cite[2.4]{GenTop}.
\end{proof}

 It follows from the previous lemma that a function on $Y$ is continuous if
and only if its lift to $Y_L$ is continuous. In particular,
\begin{cor}
The functions in $\sF_k$ are continuous if  there exists a completely valued extension field $L/k$ such that all functions in $\sF_L$ are continuous.
\end{cor}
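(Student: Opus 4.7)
The plan is to derive this corollary almost immediately from Lemma \ref{lemma:fieldext} combined with the compatibility of $\sF$ with base change. The only real content is the universal property of the quotient topology.

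First I would fix an arbitrary $Y \in \sS_k$ and an arbitrary $\varphi_Y \in \sF_k$, and reduce the question to showing that $\varphi_Y : Y \to S$ is continuous. By the compatibility of $\sF$ with base change, applied to the extension $L/k$, one has the identity
\[
\varphi_{Y_L} = \varphi_Y \circ \psi_{Y,L} : Y_L \longrightarrow S \; .
\]
The hypothesis that all functions in $\sF_L$ are continuous gives that the left-hand side is continuous.

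Next I would invoke Lemma \ref{lemma:fieldext}: the natural topology on $Y$ coincides with the quotient topology induced by the surjective (since $\psi_{Y,L}$ is compact with non-empty fibres, and in particular surjective by the standard properties of ground extension) projection $\psi_{Y,L}:Y_L \to Y$. By the universal property of the quotient topology, a map $\varphi_Y : Y \to S$ into the topological space $S$ is continuous if and only if the composition $\varphi_Y \circ \psi_{Y,L}$ is continuous. Combining with the previous paragraph, $\varphi_Y$ is continuous, which is what we wanted.

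There is no serious obstacle here: the whole force of the statement resides in Lemma \ref{lemma:fieldext}, and the corollary is just its standard categorical consequence via the base change compatibility of $\sF$. The only point worth double-checking is the surjectivity of $\psi_{Y,L}$, which is needed to conclude that the quotient topology is genuinely characterized by lifts along $\psi_{Y,L}$; this is immediate in the affinoid case from the faithful flatness of $\sA \to \sA \what\otimes_k L$ and extends to general $k$-analytic $Y$ by gluing.
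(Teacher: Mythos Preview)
Your proof is correct and follows essentially the same route as the paper: the paper simply remarks that Lemma~\ref{lemma:fieldext} implies a function on $Y$ is continuous if and only if its lift to $Y_L$ is, and the corollary is stated as an immediate consequence via the base-change compatibility of $\sF$. Your write-up spells out these steps more explicitly; the discussion of surjectivity of $\psi_{Y,L}$ is harmless but not strictly needed, since once $Y$ carries the quotient topology, the universal property already gives that $\varphi_Y$ is continuous iff $\varphi_Y \circ \psi_{Y,L}$ is.
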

This will allow us to assume in certain cases, without loss of generality, that  the ground field $k$ is maximally complete and algebraically closed.
\par
\medskip
We recall here for completeness that a function $\varphi:T \to \R$, where $T$ is
any topological space is {\it upper semicontinuous} or {\it USC}
(resp. {\it lower semicontinuous}  or {\it LSC}) if $\forall t_0 \in
T$ and $\veps >0$, there exists a neighborhood $U_{t_0,\veps}$ of
$t_0$ in $T$ such that
$$
\varphi(t) <  \varphi(t_0)  + \veps \;\; {\rm ( resp.}\;\; \varphi(t) >  \varphi(t_0)  - \veps \; {\text )}\;
$$
$\forall t \in U_{t_0,\veps}$. If $\forall \a \in I$, $\varphi_{\a}$
is USC (resp. LSC), then
$$
\varphi = \inf_{\a \in I} \varphi_{\a} \;\; {\rm ( resp.}\;\; \varphi = \sup_{\a \in I} \varphi_{\a}  \; {\text )}\;
$$
is USC (resp. LSC).
\par \noindent

\section{(Semi-) continuity of formal invariants}

\subsection{Diameter} \label{subsec:diambound}

We work here under  the assumptions of (\ref{situation}). We recall that we have defined open (resp. closed) disks $D_{\cX}(\xi,r^\pm)$ centered at $\xi \in X$ of radius $r \in (0,1]$ (resp. $r \in (0,1)$). We say that a disk $D_{\cX}(\xi,r^\pm)$ is $k$-rational if its center $\xi$ may be chosen in $X(k)$.

\begin{prop}
For any $\xi \in U$,  the diameter $\delta_{\cX}(\xi,U) > 0$.
\end{prop}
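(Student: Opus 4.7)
The plan is to reduce to the case of a $k$-rational point and then produce an explicit open polydisk contained in $U$, using the \'etale structure of $\cX$ over $\what\A^d_\kc$ together with the ultrametric triangle inequality.

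By the definition recalled at the end of the introduction, $\delta_\cX(\xi,U) := \delta_{\cX_{\sH(\xi)^\circ}}(\xi^\p,U_{\sH(\xi)})$, where $\xi^\p$ is the canonical $\sH(\xi)$-rational point of $X_{\sH(\xi)} := X \what\otimes_k \sH(\xi)$ lying above $\xi$. Base change preserves smooth \'etaleness over $\what\A^d$ and the property of being an analytic domain, so without loss of generality I may assume that $\xi$ is $k$-rational.

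For such a $\xi$, the \'etale map $\ul x : X \to \A^d_k$ admits a unique local section $\sigma_\xi : D^d_k(\ul x(\xi), 1^-) \to X$ sending $\ul x(\xi)$ to $\xi$, with image $D_\cX(\xi, 1^-)$. Setting $V := \sigma_\xi^{-1}(U \cap D_\cX(\xi, 1^-))$, this is an analytic domain of $D^d_k(\ul x(\xi), 1^-)$ containing $\ul x(\xi)$, and it suffices to exhibit some $r > 0$ with $D^d_k(\ul x(\xi), r^-) \subseteq V$. Locally, $V$ contains some affinoid subdomain $V_0$ around $\ul x(\xi)$, cut out by finitely many inequalities $|f_i| \leq c_i$ with $f_i$ analytic and $|f_i(\ul x(\xi))| \leq c_i$. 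For each $i$, either $|f_i(\ul x(\xi))| < c_i$, in which case continuity of $f_i$ gives $|f_i| < c_i$ on a small open polydisk, or $|f_i(\ul x(\xi))| = c_i$, in which case the non-archimedean triangle inequality
\[
|f_i(\eta)| \leq \max\bigl(|f_i(\ul x(\xi))|,\, |f_i(\eta) - f_i(\ul x(\xi))|\bigr)
\]
yields $|f_i(\eta)| \leq c_i$ on any sufficiently small open polydisk around $\ul x(\xi)$. Intersecting the finitely many polydisks thus produced gives $D^d_k(\ul x(\xi), r^-) \subseteq V_0 \subseteq V$ for suitable $r > 0$, whence $\delta_\cX(\xi, U) \geq r > 0$.

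The main point requiring care is the non-archimedean step in the construction of the polydisk inside $V_0$: near a point where $|f_i|$ attains the value $c_i$, it is only the ultrametric triangle inequality that allows one to conclude that $|f_i|$ does not exceed $c_i$ on a whole open disk neighborhood. The ground extension reduction and the use of the \'etale section are otherwise formal, and the rest of the argument is routine.
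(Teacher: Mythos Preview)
Your reduction to a $k$-rational point via base change and the passage through the \'etale section $\sigma_\xi$ are fine, as is the ultrametric computation at the end. The gap lies in the sentence ``$V$ contains some affinoid subdomain $V_0$ around $\ul x(\xi)$, cut out by finitely many inequalities $|f_i| \le c_i$'': you have not explained why $V_0$ may be taken of Weierstrass type. An analytic domain contains, around each of its points, some affinoid subdomain, but by Gerritzen--Grauert a general affinoid subdomain of a polydisk is only a finite union of \emph{rational} domains $\{\,|f_1|,\dots,|f_n|\le |g|\,\}$ (with $f_1,\dots,f_n,g$ having no common zero), and passing from there to a Weierstrass neighborhood of $\ul x(\xi)$ is essentially the content of the proposition. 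The repair is immediate once noticed: take $V_0$ to be such a rational domain containing $\ul x(\xi)$; since the $f_i,g$ have no common zero and $|f_i(\ul x(\xi))|\le |g(\ul x(\xi))|$, necessarily $g(\ul x(\xi))\neq 0$. On a small enough open polydisk one has $|g(\eta)-g(\ul x(\xi))|<|g(\ul x(\xi))|$, hence $|g(\eta)|=|g(\ul x(\xi))|$, and your ultrametric argument applied to each $f_i$ yields $|f_i(\eta)|\le |g(\ul x(\xi))|=|g(\eta)|$.

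For comparison, the paper proceeds differently. After the same reduction to a $k$-rational $\xi$ and to an affinoid $U$ contained in a closed polydisk $D \cong \cM(k\{r^{-1}X\})$, it observes that the reduced character $\wtilde\chi_\xi:\wtilde\sA_U\to\wtilde k$ has image $\wtilde k$, trivially integral over the image of $\wtilde{k\{r^{-1}X\}}$; hence $\chi_\xi$ is inner over $k\{r^{-1}X\}$ in the sense of \cite[2.5.2]{Berkovich}, and \cite[2.5.13]{Berkovich} then places $\xi$ in the topological interior of $U$ in $D$. Since closed subpolydisks form a neighborhood basis of the rational point $\xi$ in $D$, a small disk lies inside $U$. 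This route is shorter once one imports Berkovich's relative-interior machinery; yours is more elementary and self-contained once the rational-domain description is supplied.
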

\begin{proof} We follow the notation of \cite[2.5]{Berkovich}. We may assume that $\xi$ is a $k$-rational point of $U$, and that $U = \cM (\sA_U)$ is an affinoid  contained in a disk $D = D_{\cX}(\xi,r^+)$, with $r \in |k^{\times}|$. Then
$D$ is isomorphic as a $k$-analytic space to $\cM( k\{r^{-1}X \})$, with  $k\{r^{-1}X \} = k\{r^{-1}X_1,\dots,r^{-1}X_d \}$, so we regard $U$ as an affinoid in $D_k^d(0,r^+)$. Let $\chi_{\xi}: \sA_U \to k$ (resp. $\chi^\p_{\xi}: k\{r^{-1}X\} \to k$) be the bounded character corresponding to $\xi \in U$ (resp. $\xi \in D$). Notice that $\chi_{\xi}$ may be viewed as a bounded $k\{r^{-1}X \}$-homomorphism $ \sA_U \to k$.
The reduced character $\wtilde \chi_{\xi}: \wtilde \sA_U \to \wtilde k$ obviously satisfies condition $(d)$ of
\cite[2.5.2]{Berkovich} \ie the ring $\wtilde \chi_{\xi} (\wtilde \sA_U) = \wtilde k$ is integral over $\wtilde \chi^\p_{\xi} (\wtilde {k\{r^{-1}X \}}) = \wtilde k$, hence $\chi_{\xi}$ is inner with respect to $k\{r^{-1}X \}$.
In that case, it is known that $\xi$ lies in the topological interior of $U$ in $D$ \cite[2.5.13]{Berkovich}. So, $U$ hence contains a non trivial disk $D_{\cX}(\xi,\veps^-)$ centered at $\xi$ and $\delta_{\cX}(\xi,U) \geq \veps > 0$.
\end{proof}
%

\par
 \medskip
We recall that a  \emph{Laurent (affinoid) domain} in any $k$-affinoid space  $Y = \cM(\sB)$ is a
domain of the form
\beq\label{eq:laurent}
Y(r^{-1}f,sg^{-1}) =
\{x \in Y \mid |f_{i}(x)| \leq r_i, \,|g_j(x)| \geq s_j,\,1 \leq i
\leq n,\, 1 \leq j \leq m \}
\eeq
where $f_i$, $g_j \in \sB$, and
$r_i $, $s_j $ are positive real numbers.
\par

\subsection{Trivial estimate} \label{sec:trivialestimate}
We will assume here that the entries of the matrices $G_i$ in (\ref{eq:intrdiffsys}) are {\it bounded} analytic functions on $U$, \ie elements of the $k$-Banach algebra $\sA^+_U$. For any analytic domain $V \subset X = \cX_{\eta}$, the derivations
$\di$, for $i=1,\dots,d$, are bounded $k$-linear operators on  $\sA^+_V$. Let $|\di|_V$ denote the operator norm of $\di$ on the $k$-Banach algebra $\cL_k(\sA^+_V)$.
Then,
\begin{prop}\label{prop:trivialestimate}
For any $\xi \in U$ we have:
\beq
\wtilde R(\xi, \Sigma)  \geq
\frac{|p|^{\frac{1}{p-1}}}
{\max_{i=1,\dots,d}\l(|\di|_U,||G_i||_U\r)}  >0\,.
\eeq
\end{prop}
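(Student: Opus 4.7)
The plan is to bound the matrices $G_{[\underline\alpha]}$ uniformly on $U$ by iterating the recursion (\ref{eq:stratrecintro}), then to feed the bound into the formula (\ref{eq:liminfintro}) for $\wtilde R(\xi,\Sigma)$.

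Set $M := \max_{i=1,\dots,d}(|\di|_U,\,\|G_i\|_U)$; this is finite by hypothesis (the $G_i$ have bounded entries and $|\di|_U < \infty$). First I would show by induction on $|\underline\alpha|_\infty$ that $\|G_{\underline\alpha}\|_U \leq M^{|\underline\alpha|_\infty}$. The base case is $G_{\underline 0}= I_\mu$. For the inductive step, the ultrametric inequality applied to (\ref{eq:stratrecintro}) gives
\begin{equation*}
\|G_{\underline\alpha+\underline 1_i}\|_U \leq \max\bigl(\|\di(G_{\underline\alpha})\|_U,\,\|G_{\underline\alpha}\|_U\,\|G_i\|_U\bigr) \leq \max(|\di|_U,\|G_i\|_U)\,\|G_{\underline\alpha}\|_U \leq M^{|\underline\alpha|_\infty+1}.
\end{equation*}

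Next, for any $\xi\in U$ (rigid or not), the character $\sA_U^+\to\sH(\xi)$ associated to the canonical lift $\xi'\in U_{\sH(\xi)}$ is contracting, so $|G_{[\underline\alpha]}(\xi)| \leq \|G_{[\underline\alpha]}\|_U = \|G_{\underline\alpha}\|_U/|\underline\alpha!| \leq M^{|\underline\alpha|_\infty}/|\underline\alpha!|$. Combined with (\ref{eq:liminfintro}), this yields
\begin{equation*}
\wtilde R(\xi,\Sigma) \geq \frac{1}{M}\,\liminf_{|\underline\alpha|_\infty\to\infty} |\underline\alpha!|^{1/|\underline\alpha|_\infty}.
\end{equation*}
Finally, the classical bound $v_p(n!) = (n-s_p(n))/(p-1)\leq n/(p-1)$ gives $|\underline\alpha!| = \prod_i|\alpha_i!| \geq |p|^{|\underline\alpha|_\infty/(p-1)}$, hence $|\underline\alpha!|^{1/|\underline\alpha|_\infty} \geq |p|^{1/(p-1)}$, and the proposition follows. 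Positivity is built in, as $M<\infty$.

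There is really no hard step here; this is the standard ``trivial estimate". The only mildly delicate point is making sense of the estimate at non-rigid $\xi$: one checks that the $G_{[\underline\alpha]}$, viewed after pullback along $\psi_\xi:U_{\sH(\xi)}\to U$, retain the same sup norm, so that the sup-norm bound on $U$ immediately controls $|G_{[\underline\alpha]}(\xi')|=|G_{[\underline\alpha]}(\xi)|$. Once this is noted, the whole argument reduces to the induction on the recursion and the elementary $p$-adic estimate on $|\underline\alpha!|$.
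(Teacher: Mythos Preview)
Your proof is correct and follows essentially the same route as the paper's: induct on the recursion (\ref{eq:stratrecintro}) to get $\|G_{\underline\alpha}\|_U \leq M^{|\underline\alpha|_\infty}$, then divide by the factorial and apply Legendre's formula. The only cosmetic difference is that the paper bounds $|\underline\alpha!|$ from below by $||\underline\alpha|_\infty!|$ before invoking the digit-sum formula, whereas you apply it directly to each $|\alpha_i!|$; both yield the same liminf.
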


\begin{proof} It follows from (\ref{eq:stratrecintro}) that
for any $\ul\a\in\N^d$, with $\a_i>0$, we have
$$
\begin{array}{rcl}
||G_{\ul\a}||_U
&\leq&\ds\sup\l(\l|\l|\frac{\partial G_{\ul\a-\ul 1_i}}{\partial x_i}\r|\r|_U,
  \l|\l|G_{\ul\a-\ul 1_i}G_i \r|\r|_U \r)\\ \\
&\leq &\ds\l| \l|G_{\ul\a-\ul 1_i}\r|\r|_U \sup\l(|\di|_U,||G_i||_U
\r)\,.
\end{array}
$$
Recursively we obtain
$$
||G_{\ul\a}||_U
\leq \sup_{i=1,\dots,d}\l(|\di|_U,||G_i||_U\r)^{|\ul\a|_\infty}\, ,
$$
hence
$$
||G_{[\ul\a]}||_U
\leq \sup_{i=1,\dots,d}
\l(|\di|_U,||G_i||_U \r)^{|\ul \a|_\infty} / ||\ul\a|_\infty!|_p
\; .
$$
Finally
$$
||G_{[\ul\a]}||_U^{1/|\ul \a|_\infty}
\leq \sup_{i=1,\dots,d}
\l(|\di|_U,||G_i||_U \r) / |p|^{\frac
{|\ul \a|_\infty - S_p(|\ul \a|_\infty)}
{(p-1)|\ul \a|_\infty}
}
\; ,
$$
where $S_p(n) \leq \log_p n$ is the sum of $p$-adic digits of the natural number $n$, from which we deduce the formula in the statement.
\end{proof}

\subsection{(Upper semi-)continuity of $\xi \mapsto \delta_\cX(\xi,U)$ for $U$ a Laurent domain in $X$}
Let
$U = X(r^{-1}f,sg^{-1})$  be a Laurent domain  in $X$ (\ref{eq:laurent}), so that  $f_i$, $g_j \in
\sA$ and
$r_i $, $s_j $ are positive real numbers. We will say that $U= X(r^{-1}f,sg^{-1})$ is a {\it special} Laurent domain in $X$, if $f_i$, $g_j \in
k \{x_1,\dots , x_d\}$.
Since
$$
\ds X(r^{-1}f,sg^{-1})=\l(\mathop\cap_i X(r_i^{-1}f_i)\r)\cap
\l(\mathop\cap_j X(s_jg_j^{-1})\r)
$$
we actually have
$$
\delta_\cX(\xi, X(r^{-1}f,sg^{-1})) =
\min_{i,j}\Big(\delta_\cX\big(\xi, X(r_i^{-1}f_i)\big),
\delta_\cX\big(\xi, X(s_jg_j^{-1})\big)\Big)\,.
$$

\begin{prop}
{\rm(\cf \cite{BerkovichLetter})}
\label{prop:diamlaurent}
Let $f,g \in  \sA$, and let
$U = X(r^{-1}f)$, with $r >0$   (resp.
$U = X(sg^{-1})$, with $s > 0$), and let $\xi \in U$. Then:
\beq
\label{eq:diamlaurent1}
\delta_\cX(\xi,U) = \min (1, \inf_{1 \leq
|\underline\a|_{\infty},f^{[\underline \a]}(\xi)\neq 0} \{
r^{1/|\underline\a|_{\infty}} |f^{[\underline
\a]}(\xi)|^{-1/|\underline\a|_{\infty}} \} )\;.
\eeq
(resp.
\beq
\label{eq:diamlaurent2}
\delta_\cX(\xi,U) = \min (1, \inf_{1 \leq
|\underline\a|_{\infty},g^{[\ul\a]}(\xi)\neq 0} \{
|g(\xi)|^{1/|\ul\a|_\infty}
|g^{[\ul\a]}(\xi)|^{-1/|\ul\a|_\infty}\}) \;. \hbox{ )}
\eeq
In particular, for any Laurent domain $U \subset X$, the function $\xi \mapsto \delta_\cX(\xi,U)$ is upper semicontinuous on $U$.
If  $f,g \in k\{x_1,\dots , x_d\}$,
the infima (\ref{eq:diamlaurent1}), (\ref{eq:diamlaurent2}) are realized on a finite set of $\ul\a\in\N^d$, depending only upon $U$.
In particular, if $U$ is a special Laurent domain in $X$, the function $\xi \mapsto \delta_\cX(\xi,U)$ is continuous on $U$.
\end{prop}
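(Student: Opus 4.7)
My plan is to reduce everything to Taylor expansions on polydisks, then derive USC from continuity of the coefficients, and finally prove the finiteness statement by exploiting coefficient decay in the Tate algebra---the only genuinely nontrivial step.

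For \eqref{eq:diamlaurent1}, after using Lemma~\ref{lemma:fieldext} to reduce to the case $\xi\in X(k)$, the open polydisk $D_\cX(\xi,\rho^-)$ admits the Taylor expansion $f=\sum_{\ul\a}f^{[\ul\a]}(\xi)(\ul x-\ul x(\xi))^{\ul\a}$, and the Berkovich sup norm of $f$ on that polydisk equals $\sup_{\ul\a}|f^{[\ul\a]}(\xi)|\rho^{|\ul\a|_\infty}$. The inclusion $D_\cX(\xi,\rho^-)\subset X(r^{-1}f)$ is thus equivalent to $|f^{[\ul\a]}(\xi)|\rho^{|\ul\a|_\infty}\leq r$ for every $\ul\a$; the case $\ul\a=0$ is automatic since $\xi\in U$, and taking the supremum of $\rho\leq 1$ over the remaining constraints yields \eqref{eq:diamlaurent1}. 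For \eqref{eq:diamlaurent2} I combine Taylor expansion with the ultrametric inequality: if $|g^{[\ul\a]}(\xi)|\rho^{|\ul\a|_\infty}<|g(\xi)|$ for every $\ul\a\geq 1$, then $|g(\eta)|=|g(\xi)|\geq s$ on $D_\cX(\xi,\rho^-)$; conversely, failure of this inequality for some $\ul\a$ forces $g$, viewed as a Tate series in $\ul x-\ul x(\xi)$, to have a zero inside the disk by a Newton-polygon argument, contradicting $|g|\geq s$.

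Upper semicontinuity is then immediate. Each $\xi\mapsto|f^{[\ul\a]}(\xi)|$ is continuous on $X$, and for every $c>0$ the sublevel set $\{\xi:r^{1/|\ul\a|_\infty}|f^{[\ul\a]}(\xi)|^{-1/|\ul\a|_\infty}<c\}=\{|f^{[\ul\a]}(\xi)|>rc^{-|\ul\a|_\infty}\}$ is open; an infimum of extended-real-valued USC functions is USC, and so is the minimum with the constant $1$. The same applies to the $g$-side, and a general Laurent domain reduces to the intersection case by the preliminary decomposition of $\delta_\cX(\xi,U)$.

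The main obstacle is the finiteness statement in the special case $f,g\in k\{\ul x\}$. Writing $f=\sum_{\ul\be}c_{\ul\be}\ul x^{\ul\be}$ with $|c_{\ul\be}|\to 0$, and using that every $\xi\in X$ satisfies $|x_i(\xi)|\leq 1$ (by \'etaleness of $\cX$ over $\what\A^d_\kc$) together with the $p$-adic estimate $|\binom{\ul\be}{\ul\a}|\leq 1$, I obtain the key \emph{uniform} bound $|f^{[\ul\a]}(\xi)|\leq\sup_{\ul\be\geq\ul\a}|c_{\ul\be}|$. Choosing $N$ so that $|c_{\ul\be}|\leq r$ for $|\ul\be|_\infty>N$, the term $(r/|f^{[\ul\a]}(\xi)|)^{1/|\ul\a|_\infty}$ is $\geq 1$ whenever $|\ul\a|_\infty>N$, hence leaves the minimum with $1$ unchanged. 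The infimum in \eqref{eq:diamlaurent1} therefore reduces to a minimum over the finite, $\xi$-independent set $\{1\leq|\ul\a|_\infty\leq N\}$; the same argument handles \eqref{eq:diamlaurent2} with $r$ replaced by $s$, using $|g(\xi)|\geq s$ for $\xi\in U$. The resulting function $\delta_\cX(\cdot,U)$ is a minimum of finitely many continuous extended-real-valued functions---one family for each $f_i$ and $g_j$---and is therefore continuous.
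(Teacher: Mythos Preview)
Your proof is correct and follows essentially the same approach as the paper: both extend scalars to make $\xi$ rational, read off the formulas from the Taylor expansion on the polydisk, invoke the several-variable Newton polygon (the paper's Corollary~\ref{cor:Robba}) for the $g$-side, and deduce finiteness from the coefficient decay $|c_{\ul\be}|\to 0$ in $k\{\ul x\}$. Your explicit bound $|f^{[\ul\a]}(\xi)|\leq\sup_{\ul\be\geq\ul\a}|c_{\ul\be}|$ (via $|\binom{\ul\be}{\ul\a}|\leq 1$ and $|x_i(\xi)|\leq 1$) simply spells out what the paper records in one line as $\lim_{|\ul\a|_\infty\to\infty}\|f^{[\ul\a]}\|_X=0$.
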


\begin{proof}
We consider the case of $U = X(r^{-1}f)$, for $r \in (0,1)$ first.
We extend the base field to $\sH (\xi)$, so that the canonical
point $\xi^{\prime}$ over $\xi$ has a  neighborhood which is a
disk centered at $\xi^{\prime}$. We set ${\ul \xi} = (\xi_1,\dots,\xi_d) = (x_1(\xi),\dots,x_d(\xi))$.
The Taylor expansion at $\xi$, $g \mapsto \sum_{\ul \a} g^{[\ul \a]}(\xi) (\ul X - \ul \xi)^{\ul \a}$, produces an isometric embedding
\beq
T_{\xi,\ul x}: \sA \to k \otimes \kc\{x_1-\xi_1,\dots,x_d-\xi_d\} \; ,
\eeq
of $\sA$, equipped with the supnorm on $X$, into the ring of bounded analytic functions on $D^d_k(\ul \xi,1^-)$, with the natural norm.  The diameter
$\delta_\cX(\xi,{U})$ is then characterized as follows
$$
\delta_\cX(\xi,{U})=\sup \l\{\veps \in (0,1) \; : \;|f(x)|\leq
r\; \forall x \in D_{\sH (\xi)}^d((\xi_1,\dots.\xi_d),\veps^+)\r\}\, .
$$
Since
\beq\label{eq:disuguaglianzadiametro} \sup_{x \in
D_{\sH (\xi)}^d((\xi_1,\dots.\xi_d),\veps^+)}|f(x)|
=\sup_{\ul\a\in\N^d}|f^{[\ul\a]}(\xi)|\veps^{|\ul\a|_\infty} \leq r
\eeq
we deduce that
\beq \label{eq:deltainf}
\delta_\cX(\xi,{U}) = \min(1,  \inf_{1 \leq |\underline\a|_{\infty} ,\,f^{[\underline \a]}(\xi)\neq 0} \{
r^{1/|\underline\a|_{\infty}}|f^{[\underline
\a]}(\xi)|^{-1/|\underline\a|_{\infty}} \}) \;,
\eeq
and hence that $\xi \mapsto \delta_\cX(\xi,{U})$ is an upper semi-continuous function of
$\xi \in U$.
\par
If we now assume that  $f \in  k \{x_1,\dots , x_d\}$, then
$\lim\limits_{|\underline\a|_{\infty} \to \infty}||f^{[\underline\a]}||_X = 0$.
Then
 there exists a natural number $N$ such that
$||f^{[\underline\a]}||_X < r$, $\forall x \in X$, as soon as
$|\underline\a|_{\infty} \geq N$. The infimum in  (\ref{eq:deltainf}) is then really a minimum on the finite set $|\underline\a|_{\infty} <  N$. The function $\xi \mapsto \delta_\cX(\xi,{U})$ is continuous in this case.
\par
We now  consider the case of $U = X(sg^{-1})$, $g \in \sA$.
As in the previous case, we extend our spaces to $\sH(\xi)$, so that
we have the canonical point $\xi^{\prime}$ over $\xi$ with
$|g(\xi^\p)| = |g(\xi)|  \geq s$. Suppose that there exists $\omega \in
D_{\sH (\xi)}^d((\xi_1,\dots.\xi_d),\veps^+)$, for some $\veps \in (0,1)$, such that
$|g(\omega)|< |g(\xi)|$.
We deduce from Corollary \ref{cor:Robba} in the appendix, that $g$
has a zero in the  disk $D_{\sH (\xi)}^d((\xi_1,\dots.\xi_d),\veps^+)$ so that
$\veps > \delta_\cX(\xi,U)$.
\par
In other words, we have proven that $\delta_\cX(\xi,U)$ is
precisely the minimum distance of a zero of $g$ from $\xi^\p$. We
use Robba's theory of Newton polygons (\cf corollary \ref{cor:Robba}  in the appendix)
to obtain an explicit
formula. The conclusion is that
$$
\delta_{\cX}(\xi,{U}) =  \min(1, \inf_{1 \leq |\ul\a|_{\infty}} \{
|g(\xi)|^{1/|\ul\a|_{\infty}}
|g^{[\ul\a]}(\xi)|^{-1/|\ul\a|_{\infty}} \}) \;.
$$
As in the previous case, the infimum is really a minimum, and if $g \in  k \{x_1,\dots , x_d\}$ it is a minimum in a finite
set of $\ul\a$'s. We conclude as in the previous case.
\end{proof}
\section{The Dwork-Robba theorem and the upper semicontinuity of
$\xi \mapsto  R(\xi,\Sigma)$}

\subsection{The global growth estimate}

 We set ourselves in the situation of  (\ref{eq:intrdiffsys}). We will need the following estimate, a corollary of the generalized form  of the theorem of Dwork and Robba \cite[Chap. IV, Thm. 3.1]{DGS} given below.

 \begin{thm}{\bf (Growth estimate)} \label{thm:DworkRobbaBerk}
 Assume the entries of the matrices $G_i$ in (\ref{eq:intrdiffsys}) are bounded analytic functions on the analytic domain $U$. For any  $\xi \in U$  let $R(\xi) = R(\xi,\Sigma)$ be the radius of convergence of $\Sigma$ at $\xi$. Let, for any $\ul\be\in\N^d$, $C_{\ul\be}= C_{\ul\be}(\Sigma, U)$ be the constant
 \beq C_{\ul\be} =   \l| \l| \, {G}_{\ul\be}  \r| \r|_U  = \sup_{\zeta \in U}
\l| \ul \be ! \, {G}_{[\ul\be]} (\zeta) \r|    \; ,
\eeq
and
 $C= C(\Sigma, U)$ be
\beq C = \max_{|\ul\be|_\infty\lneqq\mu} C_{\ul\be}  \; .
\eeq
For any $\ul\a\in\N^d$ we have the
following growth estimate on the coefficients of $Y_{\xi}$
\beq\label{eq:DworkRobbaBerk}
|G_{[\ul\a]} (\xi)| \leq \l(  \sum_{|\ul\be|_\infty\lneqq\mu} C_{\ul\be} R(\xi)^{|\ul\be|_\infty} \r) \{|\ul\a|_\infty,(\mu -1)\}_p
R(\xi)^{- |\ul\a|_\infty} \leq C\{|\ul\a|_\infty,(\mu -1)\}_p
R(\xi)^{- |\ul\a|_\infty}\ ,
\eeq
where
$$
\{s,n\}_p=\sup_{1\leq\la_1\lneqq\la_2\lneqq\cdots\lneqq\la_n\leq s}
\l({1\over {|\la_1\cdots\la_n|_p}}\r)\ .
$$
\end{thm}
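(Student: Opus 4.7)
The plan is to derive the claim from the generalized Dwork-Robba theorem stated immediately below, which gives a bound on the iterated matrices $G_{[\ul\a]}$ at a point of a disk on which the fundamental solution converges, in terms of the sup-norms of the $G_{\ul\be}$'s on that same disk. Our task then reduces to choosing the disk appropriately and replacing those sup-norms on the disk by the (larger) constants $C_{\ul\be}$ defined on all of $U$.

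First, by the compatibility of the radius $R(\xi,\Sigma)$, the iterated matrices $G_{[\ul\a]}(\xi)$, and the sup-norms $C_{\ul\be} = ||G_{\ul\be}||_U$ with the ground field extension $k \hookrightarrow \sH(\xi)$ discussed in subsection \ref{subsec:ground}, we may replace the pair $(U,\xi)$ by $(U_{\sH(\xi)}, \xi')$ and assume $\xi$ is a $k$-rational point of $U$. Fix $r$ with $0 < r < R(\xi)$. By the very definition $R(\xi) = \min(\wtilde R(\xi,\Sigma),\delta_{\cX}(\xi,U))$, the closed disk $D_r := D_{\cX}(\xi,r^+)$ is contained in $U$ and the fundamental solution matrix $Y_{\xi}$ converges on $D_r$. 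Applying the generalized Dwork-Robba theorem below to the restriction of $\Sigma$ to $D_r$, at its center $\xi$, yields
\begin{equation*}
|G_{[\ul\a]}(\xi)| \leq \Bigl(\sum_{|\ul\be|_\infty < \mu} ||G_{\ul\be}||_{D_r}\, r^{|\ul\be|_\infty}\Bigr)\{|\ul\a|_\infty,\mu-1\}_p\, r^{-|\ul\a|_\infty}.
\end{equation*}
Since $D_r \subset U$, one has $||G_{\ul\be}||_{D_r} \leq C_{\ul\be}$; letting $r \to R(\xi)^-$ produces the first inequality of (\ref{eq:DworkRobbaBerk}). The second, looser inequality follows from the observation that, for $|\ul\be|_\infty < \mu$, one has $C_{\ul\be} \leq C$ and $R(\xi)^{|\ul\be|_\infty} \leq 1$; combined with the non-archimedean nature of the Cramer-type bound underlying the generalized theorem (in which the sum is really controlled termwise), each contribution is at most $C$, and thus the full bound collapses to $C\,\{|\ul\a|_\infty,\mu-1\}_p\,R(\xi)^{-|\ul\a|_\infty}$.

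The main obstacle of the proof is therefore not in this reduction but in the generalized Dwork-Robba theorem itself, a multidimensional analogue of \cite[Chap.~IV, Thm.~3.1]{DGS}. Its proof rests on the fact that in the rank-$\mu$ module $\cE$, any $\mu+1$ of the iterated matrices $G_{[\ul\a_0]},\dots,G_{[\ul\a_\mu]}$ must satisfy a linear dependence relation; the factor $\{|\ul\a|_\infty,\mu-1\}_p$ then arises from the careful $p$-adic estimate of the denominator of the associated wronskian-type Cramer determinant, in an induction on $|\ul\a|_\infty$.
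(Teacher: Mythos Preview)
Your reduction of the growth estimate to the generalized Dwork--Robba theorem (Theorem~\ref{thm:DworkRobba}) is correct and is essentially the paper's own argument: pass to $\sH(\xi)$ so that $\xi$ becomes rational, restrict $\Sigma$ to the disk $D_{\cX}(\xi,R(\xi)^-)\subset U_{\sH(\xi)}$, and apply the generalized theorem (the paper does this via Corollary~\ref{cor:DworkRobba2} with $r=R(\xi)$ directly on the open disk, whereas you work on closed disks of radius $r<R(\xi)$ and let $r\to R(\xi)^-$; this is an immaterial variation).

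The one place where your sketch diverges from the paper is in the final paragraph, where you outline the proof of Theorem~\ref{thm:DworkRobba} itself as a direct multivariate Cramer/wronskian argument (linear dependence among $\mu+1$ iterates, induction on $|\ul\a|_\infty$). The paper instead reduces to the classical one-variable Dwork--Robba theorem by a \emph{generic line} trick: one introduces the field $\sK_{\ul b,R}$, the completion of $k(b_1,\dots,b_d)$ at the Gauss norm with $|b_i|=R_i$, and pulls back along $X_i\mapsto b_iZ$, obtaining an ordinary system in the single variable $Z$ over $\sK_{\ul b,R}$ whose iterates $\cH_{[l]}$ package all the $G_{[\ul\a]}$ with $|\ul\a|_\infty=l$; the one-dimensional estimate then unpacks into~(\ref{eq:DworkRobba}). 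Your Cramer sketch is the mechanism behind that one-variable result, but the paper does not carry it out directly in $d$ variables; the generic line avoids having to organize a multivariate induction and is what makes the multi-index exponent collapse cleanly to $|\ul\a|_\infty$.
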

\begin{rmk} $\{s,n\}_p\leq s^n$.
\end{rmk}
\begin{rmk} In practice, the estimate (\ref{eq:DworkRobbaBerk}) is used in the form
\beq \label{eq:pracDR}
|G_{[\ul\a]} (\xi)| \leq C |\ul\a|_\infty^{\mu -1}
R(\xi)^{- |\ul\a|_\infty} \; .
\eeq
\end{rmk}
\begin{cor} \label{uniform} For any $\veps >0$, there exists $s_{\veps} \in \N$, such that for every $\ul \a \in \N^d$, with $|\ul\a|_\infty \geq s_{\veps}$ and every $\xi \in U$
\beq
|G_{[\ul\a]} (\xi)| ^{1/|\ul\a|_\infty} \leq (1 + \veps)/
R(\xi) \; .
\eeq
\end{cor}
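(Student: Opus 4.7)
The plan is to extract Corollary \ref{uniform} directly from the growth estimate of Theorem \ref{thm:DworkRobbaBerk}, exploiting the fact that the constant $C$ appearing there does not depend on the point $\xi$.

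First, I would invoke the practical form of the estimate, namely
\beq \label{eq:planbound}
|G_{[\ul\a]} (\xi)| \leq C\, |\ul\a|_\infty^{\mu -1} R(\xi)^{- |\ul\a|_\infty}\,,
\eeq
which follows from (\ref{eq:DworkRobbaBerk}) using the remark $\{s,n\}_p \leq s^n$. Here $C = C(\Sigma, U)$ is a fixed finite real number (finite because the $G_{\ul\be}$ for $|\ul\be|_\infty < \mu$ are assumed bounded analytic functions on $U$), and, crucially, it depends only on $\Sigma$ and $U$, \emph{not} on $\xi$.

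Next I would take $|\ul\a|_\infty$-th roots in (\ref{eq:planbound}) and obtain, uniformly in $\xi \in U$,
$$
|G_{[\ul\a]} (\xi)|^{1/|\ul\a|_\infty} \leq \frac{C^{1/|\ul\a|_\infty}\, |\ul\a|_\infty^{(\mu -1)/|\ul\a|_\infty}}{R(\xi)}\,.
$$
Since $C$ is a fixed finite constant, $C^{1/n} \to 1$ as the integer $n = |\ul\a|_\infty \to \infty$; likewise $n^{(\mu-1)/n} \to 1$. These are real-analytic facts that are completely independent of $\xi$. Given $\veps >0$, I would therefore pick $s_\veps \in \N$ large enough so that
$$
C^{1/n}\, n^{(\mu -1)/n} \leq 1+\veps \quad \text{for every } n \geq s_\veps\,,
$$
and the desired inequality follows for every $\ul\a$ with $|\ul\a|_\infty \geq s_\veps$ and every $\xi \in U$.

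There is essentially no obstacle here beyond being careful that the estimate in Theorem \ref{thm:DworkRobbaBerk} is uniform in $\xi$ with a multiplicative constant $C$ that is finite and $\xi$-independent; the passage to the $|\ul\a|_\infty$-th root then does all the work, converting the polynomial prefactor into a factor tending to $1$ uniformly. The statement is thus a soft ``spectral radius'' consequence of the Dwork--Robba growth bound.
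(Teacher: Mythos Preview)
Your argument is correct and is exactly the intended one: the paper states the corollary without proof, treating it as an immediate consequence of the practical estimate (\ref{eq:pracDR}), and your derivation via $|\ul\a|_\infty$-th roots with $C^{1/n}\,n^{(\mu-1)/n}\to 1$ is precisely how that passage is meant to be read.
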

 We mention a variation of (\ref{thm:DworkRobbaBerk}), which is often useful. Let ${\rm cl}_X(U)$ be the closure of $U$ in $X$, and let $\sA_{(U)}$ be the localization of the algebra $\sA$ with respect to the elements which do not vanish on ${\rm cl}_X(U)$. Let $\sH(U) \subset \sA_U^+$ denote the completion of $\sA_{(U)}$ in the supnorm $||~||_U$. The elements of $\sH(U)$ will be called {\it analytic elements on $U$}; they  define continuous real valued functions on  ${\rm cl}_X(U)$. Namely, if $h \in \sH(U)$ is the uniform limit $\displaystyle h = \lim_i R_i/S_i$, where $R_i,S_i \in \sA$, and $S_i$ does not vanish on ${\rm cl}_X(U)$, we may define for any limit  point $\xi \in {\rm cl}_X(U)$, $\displaystyle \xi = \lim_{j \to \infty} \eta_j$, $\eta_j \in U$, $|h(\xi)| = \displaystyle  \lim_i R_i(\eta_j)/S_i(\eta_j)$.
 Assume the entries of the matrices $G_i$ in (\ref{eq:intrdiffsys}) are analytic elements on the analytic domain $U$, and that the function $\xi \mapsto \delta_{\cX}(\xi,U)$ admits a continuous extension on
 ${\rm cl}_X(U)$. Then, $|G_{[\ul\a]} (\xi)| $ exists $\forall \, \ul \a$ and $\forall \, \xi \in {\rm cl}_X(U)$, and $\wtilde R(\xi,\Sigma)$ is defined by formula (\ref{eq:locliminf}) $\forall \, \xi \in {\rm cl}_X(U)$. Let us define
 $R(\xi,\Sigma)$, $\forall \, \xi \in {\rm cl}_X(U)$, by formula (\ref{eq:radiusdef}).

Then we have
  \begin{thm} \label{thm:DRanelem}
 Assume the entries of the matrices $G_i$ in (\ref{eq:intrdiffsys}) are analytic elements on the analytic domain $U$, and that the function $\xi \mapsto \delta_{\cX}(\xi,U)$ admits a continuous extension on
 ${\rm cl}_X(U)$.  Let, for any $\ul\be\in\N^d$, $C_{\ul\be}= C_{\ul\be}(\Sigma, U)$ and
 $C= C(\Sigma, U)$ be
the constants defined in (\ref{thm:DworkRobbaBerk}).
For any $\ul\a\in\N^d$ we have
again the growth estimate (\ref{eq:DworkRobbaBerk}).
\end{thm}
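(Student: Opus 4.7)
The strategy is to reduce Theorem \ref{thm:DRanelem} to Theorem \ref{thm:DworkRobbaBerk} by approximating the analytic-element entries of $G_i$ by rational matrices defined on open neighborhoods of $\mathrm{cl}_X(U)$, and passing to the limit.

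First I would verify the statement makes sense. The quotient rule shows each $\partial_i$ preserves $\sA_{(U)}$, and since $\partial_i$ is bounded on $\sA_U^+$ it extends by continuity to a bounded operator on the completion $\sH(U)$. Solving the recursion (\ref{eq:stratrecintro}) inductively on $|\ul\a|_\infty$ then places every entry of $G_{[\ul\a]}$ in $\sH(U)$, so $G_{[\ul\a]}$ defines a continuous function on $\mathrm{cl}_X(U)$ with finite supnorm. The constants $C_{\ul\be}$ and $C$ are finite, and $R(\xi,\Sigma) = \min(\wtilde R(\xi,\Sigma), \delta_{\cX}(\xi,U))$ extends throughout $\mathrm{cl}_X(U)$ using the hypothesis on $\delta_{\cX}$.

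Next, for each $n\in\N$, I would pick rational matrices $G_i^{(n)}$ with entries in $\sA_{(U)}$ such that $\|G_i - G_i^{(n)}\|_U < 1/n$. The denominators of their entries lie in $\sA$ and do not vanish on the compact set $\mathrm{cl}_X(U)$, hence remain nonzero on some open neighborhood $V_n \supset \mathrm{cl}_X(U)$, which I arrange to shrink to $\mathrm{cl}_X(U)$ as $n\to\infty$. Then $G_i^{(n)} \in \sA_{V_n}^+$, and Theorem \ref{thm:DworkRobbaBerk} applied to the system $\Sigma^{(n)}$ on $V_n$ gives, for every $\xi \in V_n$,
$$|G^{(n)}_{[\ul\a]}(\xi)| \leq \Big(\sum_{|\ul\be|_\infty < \mu} \|G^{(n)}_{\ul\be}\|_{V_n}\,R_n(\xi)^{|\ul\be|_\infty}\Big)\,\{|\ul\a|_\infty,(\mu-1)\}_p\, R_n(\xi)^{-|\ul\a|_\infty},$$
with $R_n(\xi) = R(\xi,\Sigma^{(n)})$. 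Induction on (\ref{eq:stratrecintro}) together with boundedness of $\partial_i$ on $\sH(U)$ propagates $G_i^{(n)} \to G_i$ in $\|\cdot\|_U$ to uniform convergence $G^{(n)}_{[\ul\a]} \to G_{[\ul\a]}$ on $\mathrm{cl}_X(U)$ for each fixed $\ul\a$; likewise $\|G^{(n)}_{\ul\be}\|_{V_n} \to C_{\ul\be}$ as $V_n$ contracts.

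The main obstacle is controlling $R_n(\xi)$ in the limit. It suffices to show $\liminf_n R_n(\xi) \geq R(\xi)$, for then $\limsup_n R_n(\xi)^{-|\ul\a|_\infty} \leq R(\xi)^{-|\ul\a|_\infty}$ and the estimate for $\Sigma$ follows by passing to the limit in the displayed inequality. Since $\delta_{\cX}(\xi, V_n) \geq \delta_{\cX}(\xi, U)$, this reduces to $\liminf_n \wtilde R_n(\xi) \geq \wtilde R(\xi)$, which in turn requires interchanging the $n \to \infty$ and $|\ul\a|_\infty \to \infty$ limits in (\ref{eq:locliminf}). The needed uniformity comes from Corollary \ref{uniform} applied to each $\Sigma^{(n)}$ on $V_n$: the threshold $s_\veps$ depends only on the constants entering the Dwork--Robba estimate, all of which are uniformly bounded in $n$ by $\{C_{\ul\be}\}$, so $s_\veps$ can be chosen independent of $n$. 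This uniform tail control permits the interchange of limits and completes the proof.
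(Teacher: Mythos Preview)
The paper gives no separate proof of Theorem~\ref{thm:DRanelem}; it is meant to follow by the same argument that derives Theorem~\ref{thm:DworkRobbaBerk} from Corollary~\ref{cor:DworkRobba2}. Since $\sH(U)\subset\sA_U^+$, the case $\xi\in U$ is already literally contained in Theorem~\ref{thm:DworkRobbaBerk}, and your approximation scheme is unnecessary there. For $\xi\in{\rm cl}_X(U)\setminus U$ the only extra ingredient is the Cauchy estimate: for $h\in\sH(U)$ and $\eta\in U$ one has $|h^{[\ul\a]}(\eta)|\le\|h\|_U\,\delta_{\cX}(\eta,U)^{-|\ul\a|_\infty}$, and by the assumed continuity of $\delta_{\cX}(\cdot,U)$ this passes to the limit $\eta\to\xi$ to give $|h^{[\ul\a]}(\xi)|\le\|h\|_U\,\delta_{\cX}(\xi,U)^{-|\ul\a|_\infty}$. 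Hence each Taylor series $T_\xi G_i$ converges on $D_{\cX}(\xi,\delta_{\cX}(\xi,U)^-)$ with supnorm at most $\|G_i\|_U$, the formal series $Y_{\xi^\p}$ solves the analytic system $\partial_i Y=(T_\xi G_i)Y$ on $D_{\cX}(\xi,R(\xi)^-)$, and Corollary~\ref{cor:DworkRobba2} applies verbatim with the same constants $C_{\ul\be}$.

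Your approximation route has a genuine gap at the step $\liminf_n\wtilde R_n(\xi)\ge\wtilde R(\xi)$. The uniform threshold $s_\veps$ furnished by Corollary~\ref{uniform} yields only $|G^{(n)}_{[\ul\a]}(\xi)|^{1/|\ul\a|_\infty}\le(1+\veps)/R_n(\xi)$ for $|\ul\a|_\infty\ge s_\veps$: this bounds the tail of $\{|G^{(n)}_{[\ul\a]}(\xi)|^{-1/|\ul\a|_\infty}\}_{\ul\a}$ below by $R_n(\xi)/(1+\veps)$, but says nothing about how $R_n(\xi)$ compares to $R(\xi)$, so it does not license the interchange of limits. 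Concretely, propagating $\|G_i^{(n)}-G_i\|_U<\epsilon_n$ through the recursion~(\ref{eq:stratrecintro}) gives at best $\|G^{(n)}_{[\ul\a]}-G_{[\ul\a]}\|_U\lesssim\epsilon_n\,K^{|\ul\a|_\infty}/|\ul\a!|$ with $K=\max_i(|\partial_i|_U,\|G_i\|_U)$; the $|\ul\a|_\infty$-th root of this tends to $K\,|p|^{-1/(p-1)}$, the reciprocal of the \emph{trivial} lower bound of Proposition~\ref{prop:trivialestimate}. Whenever $R(\xi)$ exceeds that trivial bound, term-by-term comparison cannot pin $\wtilde R_n(\xi)$ near $R(\xi)$. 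Proving $\liminf_n R_n(\xi)\ge R(\xi)$ would require a separate perturbation theorem for $p$-adic connections, not a formal consequence of the Dwork--Robba estimate; the direct argument above avoids this entirely.
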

\begin{cor} Remark (\ref{eq:pracDR}) and corollary (\ref{uniform}) hold under the assumptions of theorem (\ref{thm:DRanelem}).
\end{cor}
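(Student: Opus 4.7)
The plan is to treat this corollary as a purely formal consequence of the estimate (\ref{eq:DworkRobbaBerk}) supplied by Theorem \ref{thm:DRanelem}. The hypothesis change (passing from bounded analytic functions to analytic elements, plus the continuous extension of $\delta_\cX(\xi,U)$ to $\mathrm{cl}_X(U)$) is invisible at this level: the only input needed is the inequality
\[
|G_{[\ul\a]}(\xi)| \leq C\,\{|\ul\a|_\infty,\mu-1\}_p\, R(\xi)^{-|\ul\a|_\infty},
\]
valid for all $\xi \in U$ (in fact for all $\xi \in \mathrm{cl}_X(U)$) and all $\ul\a \in \N^d$, which Theorem \ref{thm:DRanelem} delivers directly.

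First I would derive the practical form (\ref{eq:pracDR}) by invoking the elementary bound $\{s,n\}_p \leq s^n$ noted in the Remark just after Theorem \ref{thm:DworkRobbaBerk}. Applied with $s = |\ul\a|_\infty$ and $n = \mu - 1$, this immediately yields
\[
|G_{[\ul\a]}(\xi)| \leq C\, |\ul\a|_\infty^{\mu-1}\, R(\xi)^{-|\ul\a|_\infty}.
\]

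Next, for Corollary~\ref{uniform}, I would take $|\ul\a|_\infty$-th roots of the previous inequality, obtaining
\[
|G_{[\ul\a]}(\xi)|^{1/|\ul\a|_\infty} \leq C^{1/|\ul\a|_\infty}\, |\ul\a|_\infty^{(\mu-1)/|\ul\a|_\infty}\, R(\xi)^{-1}.
\]
Since $C = C(\Sigma,U)$ and $\mu$ are global data independent of $\xi$, and since $C^{1/s} \to 1$ and $s^{(\mu-1)/s} \to 1$ as $s \to \infty$, for any $\veps > 0$ I can fix $s_\veps \in \N$ such that $C^{1/s}\, s^{(\mu-1)/s} \leq 1 + \veps$ for every $s \geq s_\veps$. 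This choice depends only on $C$, $\mu$, $\veps$, not on $\xi$, so the bound $|G_{[\ul\a]}(\xi)|^{1/|\ul\a|_\infty} \leq (1+\veps)/R(\xi)$ holds uniformly in $\xi \in U$ for $|\ul\a|_\infty \geq s_\veps$.

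There is essentially no obstacle; the corollary merely packages asymptotic consequences of the explicit estimate (\ref{eq:DworkRobbaBerk}). The one minor point worth noting is that $R(\xi) \in (0,1]$ for $\xi \in U$, so dividing by $R(\xi)^{|\ul\a|_\infty}$ is legitimate; this is built into the definition $R(\xi) = \min(\wtilde R(\xi,\Sigma),\delta_\cX(\xi,U))$ together with the positivity of both factors on $U$ (for points where $R(\xi) = 0$ the inequalities are vacuous, with $+\infty$ on the right-hand side).
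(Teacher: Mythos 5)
The paper gives no explicit proof for this corollary, which confirms it is intended as a purely formal consequence of the growth estimate~(\ref{eq:DworkRobbaBerk}) supplied by Theorem~\ref{thm:DRanelem}, exactly as you present it. Your derivation—bounding $\{|\ul\a|_\infty,\mu-1\}_p$ by $|\ul\a|_\infty^{\mu-1}$ to get~(\ref{eq:pracDR}), then taking $|\ul\a|_\infty$-th roots and using that $C^{1/s}\,s^{(\mu-1)/s}\to 1$ uniformly in $\xi$ to get Corollary~\ref{uniform}—is correct and is the intended argument.
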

 \subsection{The generalized Dwork-Robba theorem}

 The following discussion, due to Dwork,  has been previously made available by Gachet \cite{gachet}.  We rediscuss it here in the framework of Berkovich spaces. We  set ourselves in a slightly more general situation, namely we assume that the matrices $G_i$ of the system $\Sigma$ in (\ref{eq:intrdiffsys}), are meromorphic functions in a polydisk $D(a,r^-)$, for $a =
(a_1,\dots,a_d) \in k^d$, $r =
(r_1,\dots,r_d) \in \R_{>0}^d$
 $$D(a,r^-) = D^d_k(a,r^-) = \{ \xi \in \A_k^d \, | \, |x_i(\xi) - a_i| < r_i \; , \; \forall i = 1, \dots, d \; \} \; .
 $$
 The {\it field $\cM(D(a,r^-))$ of meromorphic functions on
$D(a,r^-)$} is defined as the quotient field of the
integral domain $\cO(D(a,r^-))$.
For any
$\rho = (\rho_1, \dots,\rho_d)$, $0 < \rho_i <r_i$, the maximal
point $t_{a,\rho}$ of $D(a,\rho^+)$ belongs to  $D(a,r^-)$ and
defines a multiplicative map $\cM(D(a,r^-)) \to \sH(t_{a,\rho})$, $f
\mapsto f(t_{a,\rho})$. For $f \in \cM(D(a,r^-))$, the function
$\rho \mapsto f(t_{a,\rho})$ is continuous, as shown in the
appendix, but not necessarily bounded for $0 < \rho_i <r_i$. We
define the {\it boundary seminorm} $||~~||_{a,r}$ on $\cO(D(a,r^-))$
as
\beq
||f||_{a,r} = \limsup_{\rho \to r}|f(t_{a,\rho})|\, \in
\R_{\geq 0} \cup \{\infty\} \;\; ,\;\;  f \in \cM(D(a,r^-)) \; .
\eeq
It is clear that
$$ ||f+g||_{a,r} \leq \sup (||f||_{a,r} ,||g||_{a,r}) \; ,
$$
for all $f,g \in \cM(D(a,r^-)) $, and that
$$ ||f \, g||_{a,r} \leq ||f||_{a,r} \, ||g||_{a,r} \; ,
$$
whenever the right side is defined (the only case excluded is  $||f||_{a,r}=0$, $||g||_{a,r} = \infty$). Notice that, in one variable $X$, $||1 / \log (1- X)||_{0,1} =0$. If
$f = \sum_{\ul \a \in \Z^d} a_{\ul \a}(\ul X - \ul a)^{\ul \a}$, then
\beq
||f||_{a,r} = \sup_{\ul \a} | a_{\ul \a} |
 \; .
\eeq

 We have the following generalization of the theorem of Dwork and Robba \cite[Chap. IV, Thm. 3.1]{DGS}.

\begin{thm}\label{thm:DworkRobba}
Suppose that the system (\ref{eq:intrdiffsys}) has meromorphic coefficients on $D(a,r^-) \subset \A^d_k$, $a =
(a_1,\dots,a_d) \in k^d$, and that it
admits a solution matrix $Y \in GL(\mu, \cM(D(a,R^-)) )$, meromorphic in $D(a,R^-)  = D(a,(R_1,\dots,R_d)^-) \subset D(a,r^-)$. Then, for any $\ul\a\in\N^d$ we have the
following estimate
\beq\label{eq:DworkRobba}
\l|\l|{G}_{[\ul\a]}\r|\r|_{a,R} \leq C\{|\ul\a|_\infty,(\mu -1)\}_p
\ul R^{-\ul\a}\ ,
\eeq
where $\ul R^{-\ul\a}=R_1^{-\a_1}\cdots R_d^{-\a_d}$,
$$
C=\max_{|\ul\be|_\infty\lneqq\mu}
\l(\ul R^{\ul\be}
\l|\l| \ul \be ! \, {G}_{[\ul\be]}\r|\r|_{a,R} \r)\ ,
$$
$||~~||_{a,R}$ denotes the boundary seminorm on $\cM(D_k^d(a,(R_1,\dots,R_d)^-))$, and
$$
\{s,n\}_p=\sup_{1\leq\la_1\lneqq\la_2\lneqq\cdots\lneqq\la_n\leq s}
\l({1\over {|\la_1\cdots\la_n|_p}}\r)\ .
$$
\end{thm}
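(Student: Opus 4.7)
The approach is to adapt the classical one-variable argument of Dwork--Robba \cite[Chap.~IV, Thm.~3.1]{DGS}, extended to the multivariate meromorphic setting by Gachet \cite{gachet}, and reformulate everything in terms of the boundary seminorm $||\cdot||_{a,R}$. The starting point is the tautological matrix identity $\partial^{[\ul\a]} Y = G_{[\ul\a]} Y$, which, since $Y \in GL(\mu, \cM(D(a,R^-)))$, gives the exact formula $G_{[\ul\a]} = (\partial^{[\ul\a]} Y) Y^{-1}$ in $M_\mu(\cM(D(a,R^-)))$.

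The heart of the argument is a Wronskian/Cramer computation. Because $Y$ is an invertible $\mu \times \mu$ matrix, the columns of each $\partial^{[\ul\be]} Y = G_{[\ul\be]} Y$ lie in an at most $\mu$-dimensional $\cM$-span, so among the iterated derivatives indexed by $|\ul\be|_\infty < \mu$ one can extract a family that generically forms an invertible $\mu \times \mu$ block. Any higher $\partial^{[\ul\a]} Y$ is then expressible by Cramer's rule in terms of this family plus the intermediate derivatives $\partial^{[\ul\a - \ul 1_i]} Y$ reached through the recursion (\ref{eq:stratrecintro}). Solving the recursion, $G_{[\ul\a]}$ becomes a rational expression in the $G_{[\ul\be]}$ for $|\ul\be|_\infty < \mu$, whose denominator is a Vandermonde-type determinant on distinct integers $\le |\ul\a|_\infty$; the $p$-adic size of the reciprocal of this Vandermonde is precisely the combinatorial factor $\{|\ul\a|_\infty,\mu-1\}_p$.

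Applying $||\cdot||_{a,R}$ to this explicit identity and using its submultiplicativity together with the ultrametric inequality yields the bound in (\ref{eq:DworkRobba}); the scaling factor $\ul R^{-\ul\a}$ arises naturally because each $\partial_i$ contributes $R_i^{-1}$ when one normalises by $\ul R^{\ul\be}$ inside the definition of $C$. To pass from the one-variable case of DGS to several variables I would induct on $|\ul\a|_\infty$ along the recursion (\ref{eq:stratrecintro}), carrying the estimate forward component by component. Alternatively, one can reduce directly to one variable by slicing $D(a,R^-)$ along a generic line through $a$ and transferring back via the characterisation $||f||_{a,r} = \limsup_{\rho \to r} |f(t_{a,\rho})|$, which tells us that the boundary seminorm is controlled by one-dimensional generic disks.

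The principal obstacle is the combinatorial selection of the multi-indices $\ul\be_1,\dots,\ul\be_\mu$ underlying the Wronskian in Step~1. One must simultaneously arrange that (i) each $|\ul\be_j|_\infty < \mu$ so they feed into the constant $C$; (ii) the resulting Wronskian is non-zero and has $p$-adic size matching $\{|\ul\a|_\infty,\mu-1\}_p$ as $|\ul\a|_\infty$ grows; and (iii) the weights $\ul R^{\ul\be_j}$ cancel against the $\ul R^{-\ul\a}$ factor cleanly, with any excess absorbed into $C_{\ul\be}$. This combinatorial bookkeeping, together with the subtlety of working with $||\cdot||_{a,R}$ rather than an honest Banach supnorm (since it is only a seminorm on $\cM(D(a,R^-))$), constitutes the technical core of the theorem and is where the multivariate generalisation of Gachet departs significantly from the original DGS argument.
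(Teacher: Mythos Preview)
Your ``alternative'' --- reducing to one variable via a generic line --- is in fact the paper's actual argument, and your primary Wronskian/Cramer approach has precisely the gap you yourself flag. In several variables there is no canonical way to pick $\mu$ multi-indices $\ul\be_1,\dots,\ul\be_\mu$ with $|\ul\be_j|_\infty<\mu$ so that the associated block is invertible and the resulting determinant has the clean $p$-adic size $\{|\ul\a|_\infty,\mu-1\}_p$; the one-variable Vandermonde structure simply does not survive the passage to multi-indices, and your proposal offers no mechanism to resolve this. The paper sidesteps the issue entirely by never attempting a multivariate Wronskian.

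What the paper does, and what your sketch of the alternative misses, is that the generic line is not an honest line in $k^d$ but a \emph{formal} one: one completes $k(b_1,\dots,b_d)$ to a field $\sK_{\ul b,R}$ in which $|b_i|=R_i$, and substitutes $X_i\mapsto b_iZ$ to land in $\cM(D_{\sK_{\ul b,R}}(0,1^-))$. The point of making the direction generic in this strong sense is that the boundary seminorms match \emph{exactly}: setting $\cH_l(Z)=\sum_{|\ul\a|_\infty=l}\widetilde G_{\ul\a}(Z\ul b)\,\ul b^{\ul\a}$, one checks the $\cH_l$ obey the one-variable Leibniz recursion (so they are the iterates of the pushed-forward system) and that $\|\cH_{[l]}\|_{\ul b,R}=\sup_{|\ul\be|_\infty=l}\|G_{\ul\be}/l!\|_{0,R}\,\ul R^{\ul\be}$. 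The classical one-variable Dwork--Robba bound applied to $\cH_{[l]}$ then reads back immediately as the desired multivariate estimate on $G_{[\ul\a]}$. Slicing along actual $k$-rational lines, as you suggest, would give only an inequality in one direction and would require a separate density/sup argument to recover the full boundary seminorm; the formal generic line makes this automatic.
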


\begin{proof} We may assume that $a = 0$, which will simplify notations.
Let us consider the completion $\sK_{\ul b ,R}$ of the field $k(\ul b) = k(b_1,\dots,b_d)$
(of rational functions
  in the variables $b_1,\dots,b_d$)
with respect to the absolute value $ |~|_{\ul b ,R} : f \mapsto |f(t_{0,R})|$, with respect to the variables $\ul b$, so that
$|b_i|_{\ul b, R} = R_i$, for any $i$, and $|c|_{\ul b, R} = |c|$, for any $c \in k$. We have an injective
map of $k$-algebras
\beq \label{genline}
\begin{array}{ccc}
\ds\cM(D_k^d(0,(R_1,\dots,R_d)^-))
             & \longrightarrow & \cM(D_{\sK_{\ul b ,R}}(0,1^-))
             \\ \\
f(X_1, \dots, X_d) & \longmapsto & f(b_1 Z, \dots, b_dZ)
\end{array}\ .
\eeq

For any $\ul \a \in \N^d$, we will shorten $\ul \a ! \, {G}_{[\ul\a]}$ into ${G}_{\ul\a}$, so that (\ref{eq:stratintro}) becomes
\beq
\partial^{\ul \a} \vec y = {G}_{\ul\a} \vec y \; .
\eeq
We
denote by $\widetilde{G}_{\ul\a}(Z \ul b)$ the image of
${G}_{\ul\a}$,
via the injective morphism (\ref{genline}), and define for any $l\in\N$
\beq \label{genH}
{\cH}_{[l]}(Z)={1\over l!}{\cH}_l(Z)=
{1\over l!}\l(\sum_{|\ul\a|_\infty=l}
\widetilde{G}_{\ul\a}( Z \ul b ){\ul b}^{\ul\a}\r)\,,
\eeq

We reduce to the case of dimension 1 \emph{via} a generic line argument:

\begin{lemma} Consider the system of ordinary differential equations
\beq \frac{d}{dZ} \vec y = {\cH}_1(Z) \vec y \; ,
\eeq
where ${\cH}_1(Z)$ is the matrix of meromorphic functions on $D_{\sK_{\ul b ,R}}(0,1^-)$ appearing in
(\ref{genH}). Then, in the notation (\ref{genH})
\beq
\l(\frac{d}{dZ}\r)^l \vec y = {\cH}_l(Z) \vec y \; ,
\eeq
\end{lemma}

\begin{proof}
It is enough to prove that the matrices $\cH_l(Z)$
verify the recursive relations induced by the Leibnitz formula, namely:
$$
\begin{array}{rcl}
\frac{d}{d Z}{\cH}_l(Z)+{\cH}_l(Z){\cH}_1(Z)
  &=&\ds \sum_{|\ul\a|_\infty=l\atop i=1,\dots,d}
      \l(\l(\frac{\partial}{\partial X_i}\widetilde{G}_{\ul\a}\r)( Z \ul b)+
      \widetilde{G}_{\ul\a}(Z \ul b)\widetilde{G}_{\ul 1_i}(Z \ul b)\r)
      \ul b^{\ul\a+\ul 1_i}\\ \\
  &=&\ds \sum_{|\ul\a|_\infty=l+1}
      \widetilde{G}_{\ul\a}(Z \ul b)\ul b^{\ul\a}={\cH}_{l+1}(Z)\ .
\end{array}
$$
\end{proof}

We can now conclude the proof of the theorem. We denote by $||~~||_{\ul b ,R}$ the boundary seminorm on
$\cM(D_{\sK_{\ul b ,R}}(0, 1^-))$, defined at the beginning of this section, relative to the complete field $\sK_{\ul b ,R}$. We  on the other hand keep denoting $||~~||_{0,R}$ the boundary seminorm on
$\cM(D_k^d(0, (R_1, \dots,R_d)^-))$.
We have
$$ ||\cH _{[l]}||_{\ul b ,R} =
\sup_{|\ul \beta|_\infty = l}\l|
{\frac{1}{l!} {G}_{\ul \beta}}\r|_{0,R} \ul R^{\ul \be}\,.
$$
The classical theorem of Dwork-Robba in the one variable case (\cf [DGS, IV.3.2])
implies that for any $l = |\ul \a|_{\infty}$ we obtain the estimate
$$
\begin{array}{rcl}
||{G}_{[\ul \alpha]}||_{0,R} \ul R^{\ul \a}
&\leq&||\cH _{[l]}||_{\ul b, R} \\ \\
&\leq&\ds\{l,\mu-1\}_p
    \sup_{j\leq\mu-1}||\cH _j||_{\ul b ,R}\\ \\
&\leq&\ds\{|\ul\a|_\infty,\mu-1\}_p
    \sup_{|\ul \beta|_\infty\leq\mu-1}(\ul R^{\ul \beta}
    ||{G}_{\ul\beta}||_{0,R})\\ \\
&\leq& C \{ |\ul\a|_\infty,\mu-1 \}_p\,.
\end{array}
$$
This ends the proof.
\end{proof}

\begin{cor}\label{cor:DworkRobba1} Suppose the matrices $G_i$ are holomorphic and bounded in $D(a,R^-)$. Let
$$
C=\max_{|\ul\be|_\infty\lneqq\mu}
\l(\ul R^{\ul\be}
\l|\l|{G}_{\ul\be}\r|\r|_{D(a,R^-)} \r)\  .
$$
Then, for any $\ul\a\in\N^d$ we have the
following estimate
\beq\label{eq:DworkRobba1}
|{G}_{[\ul\a]}(a)| \leq C\{|\ul\a|_\infty,(\mu -1)\}_p
\ul R^{-\ul\a} \; .
\eeq
\end{cor}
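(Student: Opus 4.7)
The plan is to specialize Theorem~\ref{thm:DworkRobba} to the center $a$ of the polydisk and read off the bound at that single point, rather than the bound on the full boundary seminorm. The first observation is that, for any bounded holomorphic function $f = \sum_{\ul\gamma} c_{\ul\gamma}(X - a)^{\ul\gamma}$ on $D(a, R^-)$, both the boundary seminorm $\|f\|_{a, R}$ and the ordinary sup norm $\|f\|_{D(a, R^-)}$ are equal to $\sup_{\ul\gamma}|c_{\ul\gamma}|\,\ul R^{\ul\gamma}$; hence they coincide. Iterating the recursion (\ref{eq:stratrecintro}) shows that each $G_{\ul\be} = \ul\be!\, G_{[\ul\be]}$ is itself bounded holomorphic on $D(a, R^-)$, so the constant $C$ defined in the corollary matches the constant appearing in the conclusion of Theorem~\ref{thm:DworkRobba} for the polydisk $D(a, R^-)$.

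Next, I would invoke Theorem~\ref{thm:DworkRobba} on $D(a, R^-)$ itself, taking the formal Taylor solution $Y_a(X) = \sum_{\ul\a} G_{[\ul\a]}(a)(X - a)^{\ul\a}$ as the required solution matrix. The theorem then produces
\[
\|G_{[\ul\a]}\|_{a, R} \;\leq\; C \, \{|\ul\a|_\infty,\, \mu - 1\}_p \, \ul R^{-\ul\a} .
\]
Since each $G_{[\ul\a]}$ is itself bounded holomorphic on $D(a, R^-)$, its value at the center satisfies the elementary estimate $|G_{[\ul\a]}(a)| \leq \|G_{[\ul\a]}\|_{D(a, R^-)} = \|G_{[\ul\a]}\|_{a, R}$, which combined with the previous inequality delivers the claimed bound.

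The main technical obstacle is verifying that $Y_a$ indeed defines an element of $\mathrm{GL}(\mu, \cM(D(a, R^-)))$, as required by the hypothesis of the theorem: Proposition~\ref{prop:trivialestimate} only guarantees convergence of the power series $Y_a$ on some subdisk $D(a, \rho^-)$ with $\rho > 0$. I would bridge this gap by applying the theorem on disks $D(a, \rho^-)$ with $\rho$ below the actual radius of convergence of $Y_a$, exploiting the monotonicity $\|G_{\ul\be}\|_{D(a, \rho^-)} \leq \|G_{\ul\be}\|_{D(a, R^-)}$ to dominate the constants, and then passing to a limit in $\rho$ to recover the target bound with $\ul R$ on the right-hand side.
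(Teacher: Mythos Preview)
Your first two paragraphs are exactly the argument the paper has in mind (the corollary is stated without proof, as an immediate specialization of Theorem~\ref{thm:DworkRobba}): identify the boundary seminorm $\|\cdot\|_{a,R}$ with the sup norm $\|\cdot\|_{D(a,R^-)}$ for bounded holomorphic functions, observe that the constants match, apply the theorem, and then use $|G_{[\ul\a]}(a)|\leq \|G_{[\ul\a]}\|_{a,R}$.

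The trouble is your third paragraph. The hypothesis that a solution matrix $Y\in GL(\mu,\cM(D(a,R^-)))$ exists is not something to be manufactured; it is inherited as a standing assumption from Theorem~\ref{thm:DworkRobba}, of which this is declared a corollary. Without that hypothesis the statement is simply false: take $d=\mu=1$, $a=0$, $R=1$, and $G=p^{-1}$. Then $G_{[n]}(0)=p^{-n}/n!$, so $|G_{[n]}(0)|=|p|^{-n}|n!|_p^{-1}\to\infty$, whereas the asserted bound (with $C=\|G_0\|=1$ and $\{n,0\}_p=1$) would force $|G_{[n]}(0)|\leq 1$. So no limiting argument in $\rho$ can succeed in general.

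Concretely, your proposed bootstrap fails at the passage to the limit. Applying the theorem on $D(a,\rho^-)$ with $\rho$ below the true polyradius of convergence of $Y_a$ yields
\[
|G_{[\ul\a]}(a)|\;\leq\;C_\rho\,\{|\ul\a|_\infty,\mu-1\}_p\,\ul\rho^{-\ul\a}\;\leq\;C\,\{|\ul\a|_\infty,\mu-1\}_p\,\ul\rho^{-\ul\a},
\]
but this estimate is self-consistent: it guarantees convergence of $Y_a$ only on $D(a,\rho^-)$, not on any strictly larger polydisk, so $\rho$ cannot be pushed beyond the radius you started from. You never reach $\ul R^{-\ul\a}$ unless you already know $Y_a$ converges on all of $D(a,R^-)$, which is precisely the hypothesis being carried over from the theorem.
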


\begin{cor}\label{cor:DworkRobba2} Suppose the matrices $G_i$ are holomorphic and bounded in $D(a,r^-)$.  Let $\xi \in D(a,r^-)$, let $\xi^\p \in D^d_{\sH(\xi)}(0,r^-)$ be the canonical point above $\xi$ and let us assume that the fundamental solution matrix (\ref{eq:taylorseriesintro}) of (\ref{eq:intrdiffsys}) at $\xi^\p$ converges in the polydisk $D_{\sH(\xi)}(\xi^\p,(R_1,\dots,R_d)^-)\subseteq D_{\sH(\xi)}(a,r^-)$. Let
$$
C=\max_{|\ul\be|_\infty\lneqq\mu}
\l(\ul R^{\ul\be}
\l|\l|{G}_{\ul\be}\r|\r|_{D(a,r^-)} \r)\  .
$$
Then, for any $\ul\a\in\N^d$ we have the
following estimate
\beq\label{eq:DworkRobba2}
|{G}_{[\ul\a]}(\xi)| \leq C\{|\ul\a|_\infty,(\mu -1)\}_p
\ul R^{-\ul\a} \; .
\eeq
\end{cor}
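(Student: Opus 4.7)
The plan is to reduce the statement to Corollary \ref{cor:DworkRobba1} by two successive elementary manipulations: base change from $k$ to $\sH(\xi)$, followed by a translation of the origin.

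First I would extend scalars to $\sH(\xi)$ and view the system (\ref{eq:intrdiffsys}) and its fundamental solution matrix on $D_{\sH(\xi)}(a,r^-)$; this places the given point $\xi$ at the $\sH(\xi)$-rational point $\xi^\p$. The key point to verify here is that the ground extension functor is isometric for supnorms of bounded holomorphic functions on open polydisks: writing $G_{\ul\be}$ as a convergent power series in $\ul x - \ul a$, the supnorm on the open polydisk equals the supremum of the Gauss norms at polyradii $\ul\rho \to \ul r$, which is manifestly invariant under complete scalar extension. Consequently $||G_{\ul\be}||_{D_{\sH(\xi)}(a,r^-)} = ||G_{\ul\be}||_{D_k(a,r^-)}$.

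Next I would translate coordinates via $y_i = x_i - x_i(\xi^\p)$, which sends $\xi^\p$ to the origin, commutes with the partial derivations $\partial/\partial x_i = \partial/\partial y_i$, and hence leaves the matrices $G_{\ul\a}$ literally unchanged. By hypothesis the Taylor series of the fundamental solution at $\xi^\p$ converges on $D_{\sH(\xi)}(\xi^\p,R^-) \subseteq D_{\sH(\xi)}(a,r^-)$, i.e.\ on $D_{\sH(\xi)}(0,R^-)$ in the new coordinates, where the $G_i$ are still holomorphic and bounded. Corollary \ref{cor:DworkRobba1} now applies directly, with center $0$ and polyradius $\ul R$, giving
$$
|G_{[\ul\a]}(\xi^\p)| \leq C'\,\{|\ul\a|_\infty,(\mu-1)\}_p\,\ul R^{-\ul\a}\,,
$$
where $C' = \max_{|\ul\be|_\infty \lneqq \mu}\bigl(\ul R^{\ul\be}\,||G_{\ul\be}||_{D_{\sH(\xi)}(\xi^\p,R^-)}\bigr)$. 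Since restricting to the smaller polydisk can only decrease the supnorm, the previous paragraph gives $C' \leq C$. Finally $|G_{[\ul\a]}(\xi^\p)| = |G_{[\ul\a]}(\xi)|$ because the matrix $G_{[\ul\a]}$ is defined over $k$ and $\xi^\p$ sits above $\xi$ under the canonical projection $\psi_\xi$, yielding (\ref{eq:DworkRobba2}).

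The only real obstacle is bookkeeping: one must check that after base change and translation the derivations, the system, the fundamental solution matrix, and the constant $C$ all transform coherently, so that Corollary \ref{cor:DworkRobba1} is applied in the exact shape that produces the bound with $C$ computed on the \emph{original} disk $D(a,r^-)$ rather than on the shifted, smaller polydisk $D_{\sH(\xi)}(\xi^\p,R^-)$. Neither reduction introduces any new growth, so no quantitative work beyond the one-variable Dwork--Robba estimate is required.
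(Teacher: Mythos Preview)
Your proposal is correct and is exactly the argument the paper has in mind: the corollary is stated without proof, immediately after Corollary~\ref{cor:DworkRobba1}, and the intended derivation is precisely the base change to $\sH(\xi)$ followed by translation to center the polydisk at $\xi^\p$, so that Corollary~\ref{cor:DworkRobba1} (equivalently Theorem~\ref{thm:DworkRobba}) applies on $D_{\sH(\xi)}(\xi^\p,R^-)$ with the fundamental solution matrix as the required solution matrix. Your bookkeeping---invariance of supnorms under ground extension, invariance of the $G_{\ul\a}$ under translation, the inequality $C'\leq C$, and $|G_{[\ul\a]}(\xi^\p)|=|G_{[\ul\a]}(\xi)|$---is all sound.
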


The proof of (\ref{thm:DworkRobbaBerk}) now follows directly. We consider $\xi \in U$, and the canonical point $\xi^\p \in U_{\sH(\xi)}$ above it; let $R = R(\xi,\Sigma) \leq 1$. The disk $D_{\cX}(\xi,R^-) \subset U_{\sH(\xi)}$ is isomorphic via the coordinate functions to $D_{\sH(\xi)}(\ul x(\xi), (R,\dots,R)^-)$. We
apply  (\ref{cor:DworkRobba2}) to the restriction of $\Sigma$ to $D_{\cX}(\xi,R^-)$, taking $r = R$.

\subsection{Upper semicontinuity of $\xi \mapsto  R(\xi,\Sigma)$}
\label{subsection:USC}

We are now back to  the assumptions in  (\ref{situation}) and (\ref{thm:DworkRobbaBerk}), so in particular the matrices $G_i$ are supposed to be {\it bounded} on $U$, and let us further insist that the function $\xi \mapsto \delta_{\cX}(\xi,U)$ be USC on $U$.
For example, by (\ref{prop:diamlaurent}), this happens when
$U$ be a Laurent domain in $X$.
For $s = 1,2, \dots$ and for $\xi \in U$, let
\beq \varphi_s(\xi) =   \min (\delta_{\cX}(\xi,U), \inf_{|\ul \a|_{\infty} \geq s} |G_{[\ul \a]}(\xi)|^{-1/|\ul \a|_{\infty}}) = \inf_{|\ul \a|_{\infty} \geq s} \min ( \delta_{\cX}(\xi,U), |G_{[\ul \a]}(\xi)|^{-1/|\ul \a|_{\infty}})
\; .
\eeq
So, $\eta \mapsto \varphi_s(\xi)$ is USC on $U$, and
\beq  R_{\cX}(\xi, \Sigma) ) = \lim_{s \to \infty} \varphi_s(\xi)   \; ,
\eeq
where $R_{\cX}(\xi, \Sigma) )$ is  the function introduced in (\ref{eq:radiusdef}).
 The  corollary (\ref{uniform}) of the Dwork-Robba theorem  says that, $\forall\, \veps >0$,  $\exists \,s_{\veps}$ such that $\forall \,\ul \a$ with $|\ul \a|_{\infty} \geq s_{\veps}$
\beq
  |G_{[\ul \a]}(\xi)|^{1/|\ul \a|_{\infty}} \leq (1+\veps)/R_{\cX}(\xi, \Sigma) \; ,\;\; \forall \, \xi \in U \; .
\eeq
So,
\beq
 |G_{[\ul \a]}(\xi)|^{ - 1/|\ul \a|_{\infty}} \geq \frac{ R_{\cX}(\xi, \Sigma) } {1+\veps} \; ,\;\; \forall \,  \xi \in U \; .
\eeq
Hence
\beq \begin{array} {cccc}
\forall \, \veps >0 & \exists\, s_{\veps} & \hbox{such that} & \forall \, s \geq s_{\veps}
\\
\\
 \varphi_s(\xi) \leq &R_{\cX}(\xi, \Sigma) & \leq (1 + \veps)  \varphi_s(\xi)  &\forall \; \xi \in U\;  ,
\end{array}
\eeq
because the sequence $s \mapsto \varphi_s$ is an increasing sequence of functions on $U$. Then, $\forall\, \veps >0$,  $\exists \,s_{\veps}$ such that
\beq 0 \leq R_{\cX}(\xi, \Sigma)  - \varphi_s(\xi) \leq \veps \;\; \forall \xi \in  U\; ,\;\;  \forall\, s \geq s_{\veps}\;.
\eeq
Then $\xi \mapsto R_{\cX}(\xi, \Sigma) $ is a uniform limit of USC functions, and is therefore USC.
We then state
\begin{thm} Assume the matrices $G_i$ are bounded analytic functions  on the analytic domain $U$, and suppose the function $\xi \mapsto \delta_{\cX}(\xi,U)$ is USC on $U$. Then $\xi \mapsto R_{\cX}(\xi, \Sigma) $ is   USC on $U$.
\end{thm}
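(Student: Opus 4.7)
My plan is to realize $R_{\cX}(\xi,\Sigma)$ as a uniform limit of an increasing sequence of USC functions on $U$, and then invoke the standard fact that such a limit is itself USC. The whole argument is essentially assembled from pieces already in the excerpt, so the plan is mostly about sequencing them correctly.

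First, for each integer $s \geq 1$ I would define
\[
\varphi_s(\xi) \;=\; \min\Bigl(\delta_{\cX}(\xi,U),\;\inf_{|\ul\a|_\infty \geq s} |G_{[\ul\a]}(\xi)|^{-1/|\ul\a|_\infty}\Bigr).
\]
Since each $G_{[\ul\a]}$ is an analytic function on $U$, the map $\xi \mapsto |G_{[\ul\a]}(\xi)|$ is continuous on $U$ with values in $[0,\infty)$. Composing with the continuous decreasing map $t \mapsto t^{-1/|\ul\a|_\infty}$ (extended by $+\infty$ at $t=0$), we obtain a continuous $[0,\infty]$-valued function, in particular USC. An infimum of USC functions is USC, and a minimum of two USC functions is USC; combined with the hypothesis that $\xi \mapsto \delta_{\cX}(\xi,U)$ is USC, this shows each $\varphi_s$ is USC on $U$.

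Second, I would identify $R_{\cX}(\xi,\Sigma) = \lim_{s\to\infty} \varphi_s(\xi)$: the sequence $s \mapsto \inf_{|\ul\a|_\infty \geq s} |G_{[\ul\a]}(\xi)|^{-1/|\ul\a|_\infty}$ is nondecreasing with limit $\wtilde R(\xi,\Sigma)$ by the definition in (\ref{eq:liminfintro}), and taking the min with the constant $\delta_{\cX}(\xi,U)$ commutes with this monotone limit, yielding $\min(\wtilde R(\xi,\Sigma),\delta_{\cX}(\xi,U)) = R_{\cX}(\xi,\Sigma)$ by (\ref{eq:radiusdef}). Thus $\varphi_s \nearrow R_{\cX}(\cdot,\Sigma)$ pointwise on $U$.

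The key step — and this is the reason the Dwork-Robba theorem enters — is to upgrade this to uniform convergence. By Corollary \ref{uniform}, for every $\veps > 0$ there exists $s_\veps$ with $|G_{[\ul\a]}(\xi)|^{-1/|\ul\a|_\infty} \geq R_{\cX}(\xi,\Sigma)/(1+\veps)$ for all $\xi \in U$ and all $\ul\a$ with $|\ul\a|_\infty \geq s_\veps$. Taking the infimum over such $\ul\a$ and then min with $\delta_{\cX}(\xi,U) \geq R_{\cX}(\xi,\Sigma)$ gives $\varphi_s(\xi) \geq R_{\cX}(\xi,\Sigma)/(1+\veps)$ for all $s \geq s_\veps$, while $\varphi_s \leq R_{\cX}(\cdot,\Sigma)$ always. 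Using $R_{\cX}(\xi,\Sigma) \leq 1$ this yields $0 \leq R_{\cX}(\xi,\Sigma) - \varphi_s(\xi) \leq \veps/(1+\veps)$ uniformly on $U$ for $s \geq s_\veps$.

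Finally, I would close with the routine observation that a uniform limit of USC functions is USC: given $\xi_0 \in U$ and $\eta > 0$, choose $s$ so that $R_{\cX}(\cdot,\Sigma) \leq \varphi_s + \eta/2$ on $U$; then for $\xi$ in a small enough neighborhood of $\xi_0$ (chosen using the USC of $\varphi_s$ at $\xi_0$), $R_{\cX}(\xi,\Sigma) \leq \varphi_s(\xi) + \eta/2 \leq \varphi_s(\xi_0) + \eta \leq R_{\cX}(\xi_0,\Sigma) + \eta$. The main obstacle — really the only nontrivial input — is the uniformity provided by Dwork-Robba; everything else is topological bookkeeping.
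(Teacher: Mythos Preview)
Your proposal is correct and follows essentially the same approach as the paper: define the same auxiliary functions $\varphi_s$, observe they are USC, identify $R_{\cX}(\cdot,\Sigma)$ as their increasing limit, and use Corollary~\ref{uniform} (the uniform Dwork--Robba estimate) to upgrade pointwise to uniform convergence. Your write-up is in fact more explicit than the paper's in justifying why each $\varphi_s$ is USC and why a uniform limit of USC functions is USC.
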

Similarly we have
\begin{thm} Assume the matrices $G_i$ are  analytic elements  on the analytic domain $U$, and suppose the function $\xi \mapsto \delta_{\cX}(\xi,U)$ admits a continuous extension to ${\rm cl}_X(U)$. Let us define $R_{\cX}(\xi, \Sigma) $ on ${\rm cl}_X(U)$ as   in theorem (\ref{thm:DRanelem}).  Then $\xi \mapsto R_{\cX}(\xi, \Sigma) $ is  USC on ${\rm cl}_X(U)$.
\end{thm}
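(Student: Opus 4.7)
The plan is to repeat almost verbatim the argument just given in the bounded-analytic case, observing that each ingredient extends continuously from $U$ to ${\rm cl}_X(U)$ under the hypotheses of the theorem. First, the discussion preceding Theorem \ref{thm:DRanelem} shows that every analytic element $h \in \sH(U)$ defines a continuous real-valued function $\xi \mapsto |h(\xi)|$ on ${\rm cl}_X(U)$. Applied entry by entry, this gives that $\xi \mapsto |G_{[\ul\a]}(\xi)|$ is continuous on ${\rm cl}_X(U)$ for every $\ul\a \in \N^d$. Together with the assumed continuous extension of $\delta_{\cX}(\cdot,U)$ to ${\rm cl}_X(U)$, this makes each function
\[
\xi \;\longmapsto\; \min\bigl(\delta_{\cX}(\xi,U),\, |G_{[\ul\a]}(\xi)|^{-1/|\ul\a|_\infty}\bigr)
\]
continuous on ${\rm cl}_X(U)$.

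Next, for every integer $s \geq 1$ set
\[
\varphi_s(\xi) \;=\; \inf_{|\ul\a|_\infty \geq s}
\min\bigl(\delta_{\cX}(\xi,U),\, |G_{[\ul\a]}(\xi)|^{-1/|\ul\a|_\infty}\bigr).
\]
As an infimum of continuous functions, $\varphi_s$ is USC on ${\rm cl}_X(U)$; moreover $(\varphi_s)_{s\geq 1}$ is increasing and converges pointwise on ${\rm cl}_X(U)$ to $R_{\cX}(\cdot,\Sigma)$, by the definition of the latter supplied in Theorem \ref{thm:DRanelem}.

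The key step is then to upgrade this pointwise convergence to uniform convergence on ${\rm cl}_X(U)$, using the corollary of Theorem \ref{thm:DRanelem} that extends Corollary \ref{uniform} to analytic elements. This provides, for every $\veps > 0$, an integer $s_\veps$ such that
\[
|G_{[\ul\a]}(\xi)|^{-1/|\ul\a|_\infty} \;\geq\; \frac{R_{\cX}(\xi,\Sigma)}{1+\veps}
\qquad \text{for all } |\ul\a|_\infty \geq s_\veps \text{ and all } \xi \in {\rm cl}_X(U).
\]
Hence $\varphi_s \leq R_{\cX}(\cdot,\Sigma) \leq (1+\veps)\,\varphi_s$ on ${\rm cl}_X(U)$ for $s \geq s_\veps$, and since $R_{\cX}(\cdot,\Sigma) \leq 1$ this gives $0 \leq R_{\cX}(\xi,\Sigma) - \varphi_s(\xi) \leq \veps$ uniformly in $\xi$. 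A uniform limit of USC functions being USC, the theorem follows.

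The only point requiring care, and which is essentially absorbed into the statement of Theorem \ref{thm:DRanelem}, is the passage from $U$ to its closure: one must know that $|G_{[\ul\a]}(\xi)|$ extends continuously to ${\rm cl}_X(U)$, that formula (\ref{eq:radiusdef}) for $R_{\cX}(\xi,\Sigma)$ still makes sense there, and that the Dwork--Robba estimate (\ref{eq:DworkRobbaBerk}) remains valid on ${\rm cl}_X(U)$. Once these extensions are granted, no new analytic difficulty appears and the argument reduces to the bounded-analytic case treated in the preceding theorem.
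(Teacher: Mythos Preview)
Your proposal is correct and is precisely what the paper intends: the paper gives no separate argument for this theorem, introducing it only with ``Similarly we have'' after the bounded-analytic case, and your write-up simply makes that ``similarly'' explicit by noting that each ingredient (continuity of $\xi\mapsto|G_{[\ul\a]}(\xi)|$ on ${\rm cl}_X(U)$ via the analytic-element discussion, the assumed continuous extension of $\delta_{\cX}$, and the Dwork--Robba estimate in the form of Theorem~\ref{thm:DRanelem} and its corollary) extends to the closure, after which the $\varphi_s$ argument goes through verbatim.
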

\subsection{Continuity of $\xi \mapsto  R(\xi,\Sigma)$ at maximal points (Dwork's transfer theorem)}
\label{subsection:CONTmax}
An immediate consequence of formula (\ref{eq:pracDR}) is the following. Let $U$ be an affinoid domain in $X$, and let $\Gamma(U) = \{\eta_1,\dots,\eta_N\}$ be the Shilov boundary of $U$. Then, for the constant $C = C(\Sigma,U)$ of
(\ref{thm:DworkRobbaBerk}),
\beq \label{eq:TRANS}
||G_{[\ul\a]}||_U \leq C |\ul\a|_\infty^{\mu -1}
\l(\min_{i=1,\dots,N}R(\eta_i,\Sigma)\r)^{- |\ul\a|_\infty} \; .
\eeq
This shows that, for any $\xi \in U$,
\beq \label{eq:EST}
\wtilde R(\xi,\Sigma) \geq \min_{i=1,\dots,N}R(\eta_i,\Sigma) \; .
\eeq
\begin{prop} Let us assume that $U$ is a Laurent domain in $X$ with a unique maximal point $\eta_U$, and  that the function $\xi \mapsto  \delta_{\cX}(\xi,U)$ be continuous at $\eta_U$. Then,
$\xi \mapsto R(\xi,\Sigma)$ is continuous at $\eta_U$.
\end{prop}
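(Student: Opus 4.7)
The plan is to combine the upper semicontinuity of $R(\cdot,\Sigma)$ already established in the previous subsection with a matching lower semicontinuity at $\eta_U$ extracted from the transfer estimate (\ref{eq:EST}). Since $U$ is an affinoid Laurent domain having a unique maximal point, its Shilov boundary is the singleton $\Gamma(U) = \{\eta_U\}$, so applying (\ref{eq:EST}) with $N = 1$ yields the pointwise inequality
\[
\wtilde R(\xi,\Sigma) \geq R(\eta_U,\Sigma), \qquad \forall\,\xi \in U.
\]
Combined with the definition (\ref{eq:radiusdef}), this gives
\[
R(\xi,\Sigma) = \min(\wtilde R(\xi,\Sigma),\delta_{\cX}(\xi,U)) \geq \min(R(\eta_U,\Sigma),\delta_{\cX}(\xi,U)).
\]

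Next, I would fix $\veps > 0$ and use the hypothesis that $\delta_{\cX}(\cdot,U)$ is continuous at $\eta_U$, together with the trivial inequality $R(\eta_U,\Sigma) \leq \delta_{\cX}(\eta_U,U)$, to produce a neighborhood $V$ of $\eta_U$ in $U$ on which
\[
\delta_{\cX}(\xi,U) > \delta_{\cX}(\eta_U,U) - \veps \geq R(\eta_U,\Sigma) - \veps .
\]
Feeding this into the previous display gives $R(\xi,\Sigma) \geq R(\eta_U,\Sigma) - \veps$ on $V$, i.e.\ lower semicontinuity of $R(\cdot,\Sigma)$ at $\eta_U$.

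Finally, the first theorem of the previous subsection applies to $U$ (it is a Laurent domain, so $\delta_{\cX}(\cdot,U)$ is USC by (\ref{prop:diamlaurent}), and the $G_i$ are bounded on $U$ as in (\ref{eq:TRANS})), yielding upper semicontinuity of $R(\cdot,\Sigma)$ at $\eta_U$. Together with the lower semicontinuity just obtained, this produces continuity at $\eta_U$. I do not see a real obstacle here: the only conceptual point is that the Shilov boundary of an affinoid with a unique maximal point reduces to that point, which makes the Dwork--Robba transfer estimate (\ref{eq:EST}) collapse into a clean lower bound $\wtilde R(\xi,\Sigma) \geq R(\eta_U,\Sigma)$ that does all the work.
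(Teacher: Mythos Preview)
Your proof is correct and follows essentially the same approach as the paper: both combine the upper semicontinuity from the previous subsection with lower semicontinuity at $\eta_U$ obtained from the transfer estimate (\ref{eq:EST}) applied with $\Gamma(U)=\{\eta_U\}$, together with the continuity of $\delta_{\cX}(\cdot,U)$ at $\eta_U$. Your write-up is simply more explicit about the inequality chain leading to lower semicontinuity than the paper's terse version.
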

\begin{proof}

USC of $\xi \mapsto R(\xi,\Sigma)$, shows that
$$
\lim_{\xi \to \eta_U}  R(\xi,\Sigma) \leq  R(\eta_U,\Sigma)\; .
$$
Since
$$
\lim_{\xi \to \eta_U}  \delta_{\cX}(\xi,U) =  \delta_{\cX}(\eta_U,U) \;  ,
$$
we conclude by (\ref{eq:EST}) that
$$
\lim_{\xi \to \eta_U} R(\xi,\Sigma) = R(\eta_U,\Sigma)\; .
$$
\end{proof}
\begin{cor} \label{eq:contGEN} If $U = X$, $\xi \mapsto R(\xi,\Sigma)$ is continuous at $\eta_X$.
\end{cor}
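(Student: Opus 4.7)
The plan is to derive this corollary directly from the preceding proposition by taking $U = X$. Two hypotheses must be checked: first, that $X$ is a Laurent domain in itself, which is trivial (take the defining system in (\ref{eq:laurent}) to be empty, so that $U = Y = X$); second, that $\xi \mapsto \delta_\cX(\xi,X)$ is continuous at $\eta_X$.

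For the latter I would establish the stronger claim that $\delta_\cX(\xi,X) = 1$ identically on $X$, so that the diameter function is constant and a fortiori continuous. For a $k$-rational point $\xi \in X$, the paragraph immediately following the statement of situation (\ref{situation}) provides a canonical local section $\sigma_\xi : D_k^d(\ul x(\xi), 1^-) \to X$ of the étale structure map $\ul x$, defined on the \emph{entire} open unit polydisk and sending $\ul x(\xi)$ to $\xi$; its image $D_\cX(\xi,1^-)$ is contained in $X$ by construction, so $\delta_\cX(\xi,X) \geq 1$, and since radii strictly larger than $1$ are excluded by definition, equality holds. For a general $\xi \in X$, the definition $\delta_\cX(\xi,X) := \delta_{\cX_{\sH(\xi)^\circ}}(\xi^\p, X_{\sH(\xi)})$ reduces the statement to the $\sH(\xi)$-rational case applied to the canonical point $\xi^\p$ above $\xi$ on $X_{\sH(\xi)}$, and the same argument gives $\delta_\cX(\xi,X) = 1$.

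Once this is recorded, the preceding proposition applies verbatim to $U = X$ and yields continuity of $\xi \mapsto R(\xi,\Sigma)$ at $\eta_X$. I do not expect any genuine obstacle here: the content of the corollary is really just the observation that the ``boundary'' of $X$ is invisible from within, in the sense that every point of $X$ admits a full open unit residue disk inside $X$, so the previous proposition's technical hypothesis on the diameter is trivially satisfied when $U$ is the whole space. The only delicate point is to recognize that the local section $\sigma_\xi$ provided by the étale structure is defined on the entire open unit polydisk and not merely on some smaller neighborhood, which is exactly the content of the local description of residue disks recalled just after situation (\ref{situation}).
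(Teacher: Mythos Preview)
Your proposal is correct and follows essentially the same approach as the paper: the paper's proof consists of the single line ``$\delta_{\cX}(\xi,X) = 1$, $\forall \, \xi \in X$'', which is precisely the key observation you establish (with more detail) before invoking the preceding proposition. Your additional justification via the local section $\sigma_\xi$ and the base-change reduction for non-rational points is a faithful unpacking of facts already recorded in the introduction.
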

\begin{proof}
$\delta_{\cX}(\xi,X) = 1$, $\forall \, \xi \in X$.
\end{proof}
\section{The one-dimensional case}

\subsection{The theorem of Christol-Dwork revisited}

Christol and Dwork (\cf \cite{ChristolDwork}) consider a differential system
\beq \label{diffCHDW}
\Sigma = \Sigma_{x,G,U} \;\; : \;\; \frac d{dx} \vec y=G\,\vec y
\eeq
with $G$ a $\mu\times\mu$ matrix of  analytic elements
on the annulus
$$
C(r_1,r_2)=\l\{\xi:r_1<|x(\xi)|<r_2\r\}
\subset D_k(0,1^+)\,.
$$
So, the entries of $G$ are elements of the $k$-Banach algebra $\sH(r_1,r_2)$ of uniform limits on $C(r_1,r_2)$ of rational functions in $k(x)$, having no pole in  $C(r_1,r_2)$.
Here
$$
{\rm cl}_X(U) = C^+(r_1,r_2) = C(r_1,r_2) \cup \{t_{0,r_1}, t_{0,r_2} \}\,.
$$
Christol and Dwork  consider the function radius of convergence of (\ref{diffCHDW}), restricted to a segment of points in $C^+(r_1,r_2)$, namely
$$
\begin{array}{cccc}  R:&
[r_1,r_2]&\longrightarrow & \R_{\geq 0}\\
&r & \longmapsto &  R(r):= R(t_r,\Sigma) = \min (r,\wtilde R(t_r,\Sigma))
\end{array}\,,
$$
where $t_r=t_{0,r}$ is the point at the boundary of $D(0,r^-)$. This coincides with our definitions (\ref{thm:DRanelem}) taking into account the fact that the function $\xi \mapsto |x(\xi)|$ extends continuously $\xi \mapsto \delta_{\cX}(\xi,C(r_1,r_2))$ to $C^+(r_1,r_2)$.
\par
So the problem is to describe
$$
r\longmapsto \wtilde R(r,\Sigma)=\liminf_{s\to\infty}\l|G_{[s]}(t_r)\r|^{-1/s}
$$
on $[r_1,r_2]$. They use the well-known fact that, for any $f \in \sH(r_1,r_2)$, the function
$\rho \mapsto \log |f (t_{e^{\rho}})|$ is convex and continuous on the interval $[r_1,r_2]$.
It is an elementary fact that,
if $\forall i \in \N$, $\varphi_{i}: [r_1,r_2] \to \R$
is a convex (resp. concave) function, then
$$
\varphi = \limsup_{i \to \infty} \varphi_i \;\; {\rm ( resp.}\;\; \varphi = \liminf_{i \to \infty} \varphi_{i}  \; {\text )}\;
$$
is convex (resp. concave).
They conclude that
the function
$\rho\mapsto \log \wtilde R(e^\rho)$ is concave (\ie $\cap$-shaped) in
$[\log r_1,\log r_2]$.
So the function $\wtilde R$ is continuous in $(\log r_1,\log r_2)$ and LSC
at $\log r_1$ and $\log r_2$. Then the same is true for the function $R$.
But we have proven in section (\ref{subsection:USC}),
that the function $R$ is USC in $U$, so,
in the present case,  $R$ is continuous.
The conclusion is that:

\begin{thm}[Christol-Dwork] \label{thm:ChDw} Let $\cX = \what \A^1_{\kc}$,  $U = C(r_1,r_2)$, and assume the entries of $G$ are analytic elements on $C(r_1,r_2)$.
Then the  function
$$
\begin{array}{ccc}
[r_1,r_2]&\longrightarrow & \R_{>0}\\ \\
r & \longmapsto &  R(r) = R(t_{0,r},\Sigma)
\end{array}
$$
is continuous.
\end{thm}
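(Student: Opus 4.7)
The plan is to combine the concavity argument of Christol--Dwork, which yields lower semicontinuity, with the upper semicontinuity theorem already established in Section~\ref{subsection:USC} for analytic elements on ${\rm cl}_X(U)$.

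First, I would recall the classical fact that for any analytic element $f\in\sH(r_1,r_2)$, the function $\rho\mapsto\log|f(t_{0,e^\rho})|$ is convex and continuous on $[\log r_1,\log r_2]$; this follows from the corresponding property for rational functions in $k(x)$ with no poles in $C(r_1,r_2)$ (whose $\log$-absolute values on the points $t_{0,r}$ are piecewise affine in $\log r$ by Newton polygon considerations) and uniform passage to the limit. Since each entry of the matrix $G_{[s]}$ is an analytic element on $C(r_1,r_2)$, the same holds for $\rho\mapsto\log\|G_{[s]}(t_{0,e^\rho})\|$, and hence for the function
\[
\rho\longmapsto -\tfrac1s\log\|G_{[s]}(t_{0,e^\rho})\|=\log|G_{[s]}(t_{0,e^\rho})|^{-1/s},
\]
which is therefore concave and continuous on $[\log r_1,\log r_2]$.

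Next, since the liminf of a family of concave functions is concave, I would conclude that
\[
\rho\longmapsto \log\wtilde R(e^\rho,\Sigma)=\liminf_{s\to\infty}\log|G_{[s]}(t_{0,e^\rho})|^{-1/s}
\]
is concave on $[\log r_1,\log r_2]$. A concave function on a closed interval is automatically continuous on the open interior and lower semicontinuous at the endpoints. Thus $\rho\mapsto\wtilde R(e^\rho,\Sigma)$, and hence $r\mapsto\wtilde R(t_{0,r},\Sigma)$, is continuous on $(r_1,r_2)$ and LSC on $[r_1,r_2]$. Since $r\mapsto r$ is continuous and the pointwise minimum of LSC functions is LSC, the function $r\mapsto R(r)=\min(r,\wtilde R(t_{0,r},\Sigma))$ is LSC on $[r_1,r_2]$ and continuous on $(r_1,r_2)$.

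To finish, I would invoke the USC statement proved in Section~\ref{subsection:USC} for analytic elements: since the coordinate function $\xi\mapsto|x(\xi)|$ provides a continuous extension of $\xi\mapsto\delta_\cX(\xi,C(r_1,r_2))$ to ${\rm cl}_X(U)=C^+(r_1,r_2)$, the theorem applies to show that $\xi\mapsto R_\cX(\xi,\Sigma)$ is USC on $C^+(r_1,r_2)$. Restricting to the segment $r\mapsto t_{0,r}$ shows that $r\mapsto R(r)$ is USC on $[r_1,r_2]$. Combined with the LSC obtained from concavity, this gives the desired continuity of $R$ on the closed interval $[r_1,r_2]$. The main conceptual point is that neither the concavity argument alone (which fails at the endpoints) nor the Dwork--Robba-based USC alone suffices: it is precisely the combination of the two opposite semicontinuities at the boundary points $r_1$ and $r_2$ that yields the theorem.
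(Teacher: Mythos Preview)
Your argument is correct and follows essentially the same route as the paper: you establish lower semicontinuity (continuity on the open interval, LSC at the endpoints) via the Christol--Dwork concavity of $\rho\mapsto\log\wtilde R(e^\rho,\Sigma)$, and then combine it with the upper semicontinuity furnished by Section~\ref{subsection:USC} applied to analytic elements with $\delta_\cX$ continuously extended to $C^+(r_1,r_2)$. The paper's presentation is terser but the logical skeleton is identical.
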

\begin{rmk}
We do not claim that the  function $r \mapsto \wtilde R (r,\Sigma)$ is continuous at $r_1$, $r_2$.
\end{rmk}

\subsection{Continuity of $\xi \mapsto R_{\cX}(\xi,\Sigma)$ on an affinoid $U \subset D(0,1^+)$ of dimension 1}
\label{proofCONT}
In this section we consider a system $\Sigma = \Sigma_{x,G,U}$ of the form (\ref{diffCHDW}) on an affinoid  domain $U$ of $D(0,1^+)$.
So, this is the case of system (\ref{eq:intrdiffsys}) under the assumptions of (\ref{situation}),  in dimension one, and with the further condition that  $U$ is affinoid and $\cX=\what{\A}^1_{k^\circ}$.
\par

We prove continuity of $\xi \mapsto R(\xi,\Sigma)$. Since our definitions of diameter and radius of convergence of a system are invariant by base-field extension, we may apply the discussion of  (\ref{subsec:ground}), and assume that the field $k$ is maximally complete and
algebrically closed. An affinoid $U$ of $D(0,1^+)$
is of the form
\beq \label{specialset}
U=D(0,1^+)\smallsetminus\cup_{i\in I}D(a_i,r_i^-)\subset X\,,
\eeq
where $I$ is a finite set and $a_i$ is a  $k$-rational point of $D(0,1^+)$. We are left to prove LSC continuity of $\xi \mapsto R(\xi,\Sigma)$ for this  system.
\par
Notice that, because $k$ is maximally complete and
algebrically closed, the points of $D(0,1^+)$ are either $k$-rational points or of the form  $t_{a,r}$ = the boundary point of a disk $D(a,r^-)$, centered at
a $k$-rational  point $a$ and of radius $r \in (0,1]$.

\begin{thm} \label{thm:onedimCONT}
The function $\xi \mapsto R(\xi,\Sigma)$ is  continuous on $U$.
\end{thm}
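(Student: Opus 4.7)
Since USC of $R$ on $U$ is established in Section~\ref{subsection:USC}, the task reduces to proving lower semicontinuity at every $\xi \in U$. Under the standing reduction that $k$ is algebraically closed and maximally complete, every point of $X = D(0,1^+)$ is either $k$-rational or of the form $t_{a,s}$ with $a \in k$ and $s \in (0,1]$, so I plan to split the argument into two cases.

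The $k$-rational case is essentially immediate. Setting $R_0 := R(\xi,\Sigma)$, the solution matrix $Y_\xi$ converges on $D_\cX(\xi,R_0^-) \subset U$. For any $\eta$ in the Berkovich-open neighborhood $W_\xi := \{\eta \in U : |x(\eta)-x(\xi)| < R_0\}$ of $\xi$, the canonical point $\eta'$ above $\eta$ in $\A^1_{\sH(\eta)}$ satisfies $|x(\eta')-x(\xi)| < R_0$, so by ultrametricity $D_{\sH(\eta)}(\eta',R_0^-) = D_{\sH(\eta)}(\xi,R_0^-) \subset U_{\sH(\eta)}$; the base change of $Y_\xi$ is thus a fundamental solution at $\eta'$ converging on a disk of radius $R_0$, yielding $R(\eta,\Sigma) \geq R_0$ throughout $W_\xi$.

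The non-rational case $\xi = t_{a,r}$ is the substantive one. I plan to exploit the tree structure of the one-dimensional Berkovich space by combining Christol-Dwork continuity along the main segment through $\xi$ (Theorem~\ref{thm:ChDw}) with the Dwork transfer bound (\ref{eq:EST}) of Section~\ref{subsection:CONTmax} to handle the transverse branches. Given $\veps > 0$, Christol-Dwork applied to an annulus containing the main segment through $\xi$ provides $\veps_1 > 0$ such that $R(t_{a,s},\Sigma) \geq R_0 - \veps$ for every $s \in [r-\veps_1, r+\veps_1]$. Let $I_{\rm bad} \subset I$ be the (finite) set of holes with $R(t_{a_i,r_i},\Sigma) < R_0 - \veps$. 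Since $\xi \in U$ forces $|x(\xi)-a_i| = \max(|a-a_i|, r) \geq r_i$ for every $i \in I$, one may choose radii $\rho_i$ with $r_i < \rho_i < |x(\xi)-a_i|$ so as to construct the Berkovich-open neighborhood
\[
W \;=\; \{\eta \in U : r-\veps_1 < |x(\eta)-a| < r+\veps_1\} \;\cap\; \bigcap_{i \in I_{\rm bad}} \{\eta : |x(\eta)-a_i| > \rho_i\}
\]
of $\xi$. For $\eta \in W$, set $s_\eta = |x(\eta)-a| \in (r-\veps_1, r+\veps_1)$ and pick a $k$-rational representative $b$ of the branch of $\eta$ with $|b-a| = s_\eta$ (taking $b = a$ if $\eta$ lies on the main segment). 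I expect the minimal affinoid $D(b,s_\eta^+) \cap U$ containing $\eta$ (or, if $s_\eta < |b-a_i|$ is needed to avoid a bad hole on the branch, a suitably smaller closed subdisk $D(b,s'^+) \subset D(b,s_\eta^+)$) to have Shilov boundary consisting of $t_{a,s_\eta}$ (or $t_{b,s'}$) together with generic points $t_{a_j,r_j}$ indexed only by $j \notin I_{\rm bad}$; every Shilov contribution to (\ref{eq:EST}) is then $\geq R_0 - \veps$, giving $\wtilde R(\eta,\Sigma) \geq R_0 - \veps$. Combined with continuity of $\delta_\cX(\cdot,U)$ (Proposition~\ref{prop:diamlaurent} applied to the special Laurent domain $U$), this yields $R(\eta,\Sigma) \geq R_0 - \veps$ on $W$.

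The hard part is the geometric bookkeeping verifying that (i) the excluded closed disks $\{|x-a_i| \leq \rho_i\}$ all avoid $\xi$ so that $W$ is genuinely a neighborhood of $\xi$, and (ii) for every $\eta \in W$ the closed disk chosen through $\eta$ can be selected with its Shilov boundary free of bad contributions: this second point requires an ultrametric analysis of how the holes $D(a_i,r_i^-)$ sit relative to the branch through $\eta$, and it is here that the strict inequality $|x(\eta)-a_i| > \rho_i > r_i$ is exploited to shrink the affinoid around $\eta$ enough to evict the hole $a_i$ from its Shilov boundary. The finiteness of $I$ and the freedom to shrink $\veps_1$ and each $\rho_i$ simultaneously make this tractable but tedious.
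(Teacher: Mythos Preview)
Your rational-point case is correct and matches the paper. At a point $\xi = t_{a,r}$ you are also using the right tools (the transfer estimate \eqref{eq:EST} plus Christol--Dwork), but your construction of $W$ has a genuine gap when there is a \emph{bad} hole $a_i$ with $|a_i - a| = r$, i.e.\ $i$ in the paper's set $J$.

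For such $i$ you have $|x(\xi) - a_i| = \max(|a-a_i|,r) = r$, so you pick $\rho_i \in (r_i,r)$ and cut $\{|x - a_i| \leq \rho_i\}$ out of $W$. This still leaves every point $\eta = t_{a_i,\tau}$ with $\rho_i < \tau < r$ inside $W$ (indeed $|x(\eta)-a| = r$ and $|x(\eta)-a_i| = \tau > \rho_i$). But any closed disk $D(b,s'^+)$ containing $\eta$ must satisfy $|a_i - b| \leq s'$ and $s' \geq \tau > r_i$, hence contains the bad hole $D(a_i,r_i^-)$; its Shilov boundary therefore includes the bad point $t_{a_i,r_i}$, and no amount of shrinking evicts it, because the bad hole sits \emph{below} $\eta$ in the tree. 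Your ``suitably smaller closed subdisk'' manoeuvre only works for bad holes lying on side branches of the disk, not for one lying on the segment from $\eta$ down to $a_i$.

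The paper circumvents this by not trying to separate good holes from bad. It builds a single shrinking family $V_\veps$ of affinoid neighborhoods of $\xi$ whose Shilov boundary consists of $t_{a,r+\veps}$ together with the points $t_{a_i,r-\veps}$ for \emph{every} $i \in J$, all of which converge to $\xi$ as $\veps \to 0$. It then applies Christol--Dwork not only on the annulus $\{r < |x-a| < r+\veps\}$ but separately on each annulus $\{r-\veps < |x-a_i| < r\} \subset V_\veps$, obtaining $R(t_{a_i,r-\veps},V_\veps) \to R(t_{a,r},V_\veps)$ for every $i \in J$. The point you are missing is precisely this: Christol--Dwork must be invoked along \emph{each} branch emanating from $\xi$, one per hole in $J$, not just along a single ``main segment''. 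Once you add that, your argument essentially becomes the paper's.
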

\begin{proof}
\par

We will have to restrict the system $\Sigma$ to various affinoid subdomains $V$ of $U$. We then write  $R(\xi,V)$ for  $R(\xi,\Sigma)$, when $\Sigma$ is restricted to the affinoid $V \subset U$.
Let $\xi \in U$ be a $k$-rational point. Then the function  $\eta \mapsto R(\eta, U)$, which expresses the radius of the maximal open disk centered at $\eta$ and contained in $U$ on which $\Sigma$ is the trivial differential system, is clearly constant in the neighborhood $D_k(\xi,R(\xi,U)^-)$ of $\xi$. This neighborhood is non empty since  $R(\xi,U) = \min(\wtilde R(\xi,\Sigma), \delta_{\cX}(\xi,U))$, and we compute:
 \beq
 \delta_{\cX}(\xi,U) = \min_{i \in I} |x(\xi) - a_i| >0 \; ,
 \eeq
  and,  by (\ref{prop:trivialestimate}),
\beq \wtilde R(\xi,\Sigma)   \geq \min (1,
\frac{|p|^{\frac{1}{p-1}}}
{\max \l(|\frac{d}{dx}|_U,||G||_U\r)} ) >0\, .
\eeq
We are left to prove  continuity (in fact just  LSC) of  $\xi \mapsto R(\xi,U)$ at a  point $\xi \in U$ of the form
 $\xi=t_{a,r}\in U$. Notice that we may (and will) assume $R = R(t_{a,r},U) \leq r$, otherwise on the disk $D(a,R^-)$, which is an open neighborhood of  $t_{a,r}$, the function $\xi \mapsto R(\xi,U)$ would be constant of value $R$, and $\xi \mapsto R(\xi,U)$ would then be continuous at $\xi=t_{a,r}$. On the other hand, $\delta_{\cX}(t_{a,r},U) \geq r$, so in particular we assume $ \wtilde R(t_{a,r},\Sigma) \leq \delta_{\cX}(t_{a,r},U)$.
 \par
 Let
$$
J = \l\{i\in I:|a-a_i|=r \r\}\,,
$$
and let $\veps_0 := \min_{i \notin J}  |a-a_i|$. We further subdivide $J$ into a disjoint union $J = J_1 \cup J_2$, where
$$
J_1 = \l\{i\in J: r_i < r \r\} \;\;\; , \;\;\; J_2 = \l\{i\in J: r_i = r \r\} \; .
$$
 We want to construct a (not fundamental) system of affinoid neighborhoods $\{ V_\veps \}_{\veps > \veps_0}$ of $t_{a,r} \in U$, with the property that the Shilov boundary $\Gamma(V_\veps)$ of $V_\veps$
consists of the points $t_{a_i,r - \veps}$, for $i \in J_1$, $t_{a_i,r } = t_{a,r } $, for $i \in J_2$ and of the point $t_{a,r + \veps}$. Notice that $t_{a,r + \veps} \to t_{a,r }$, and $t_{a_i,r - \veps} \to t_{a,r }$,  as $\veps \to 0$, $\forall \, i \in J_1$. We simply take for $\veps >  \veps_0$
\beq
V_\veps=\l\{\eta\in X:r-\veps\leq|x(\eta)-a|\leq r+\veps\r\} \setminus \l(\bigcup_{i \in J_1}D(a_i, (r - \veps)^-)
\cup
\bigcup_{i \in J_2}D(a_i, r^-) \r)
 \; .
\eeq
Notice that
\beq
 \delta_{\cX}(t_{a,r+\veps}, V_\veps) = r + \veps \; \;  ; \; \;
\delta_{\cX}(t_{a,r}, V_\veps) = r \; \;  ; \; \;
\delta_{\cX}(t_{a_i,r-\veps}, V_\veps) = r-\veps \;\; \forall i \in J\; .
\eeq

Coming back to our differential system (\ref{diffCHDW}) and its iterates
$$
\frac 1{s!}\l(\frac d{dx}\r)^s Y=G_{[s]}Y\,,
\hbox{ $G_{[s]}\in M_{\mu\times\mu}(\cO(U))$,}
$$
we have, by (\ref{eq:EST}), $\forall \eta \in V_\veps$,
$t_{a_i,r - \veps}$, for $i \in J_1$, $t_{a_i,r } = t_{a,r } $, for $i \in J_2$ and of the point $t_{a,r + \veps}$.
$$
\wtilde R (\eta,\Sigma) \geq \, \min \, (\, \min_{i \in J} R  (t_{a_i,r-\veps},V_\veps) \, ,  \, R  (t_{a,r},V_\veps))
\,.
$$
The  affinoid  $V_\veps$   contains  the annuli
$\{ \eta \in X \, | \, r< |x(\eta)-a| < r +\veps \}$, $\{ \eta \in X \, | \, r - \veps < |x(\eta)-a_i| < r  \}$
 and analytic functions on $V_\veps$ restrict to analytic elements on them. So, we may apply the
 theorem of Christol-Dwork (\ref{thm:ChDw}) to deduce
$$
\lim_{\veps\to 0}
 R  (t_{a_i,r-\veps}, V_\veps) =   R (t_{a,r},V_\veps)\, ,
$$
$\forall \, i \in J$, and similarly $\displaystyle
\lim_{\veps\to 0}
 R  (t_{a,r+\veps}, V_\veps) =   R (t_{a,r},V_\veps)$.
 Notice that
 $$
\lim_{\veps\to 0}
 \delta_{\cX} (t_{a_i,r-\veps}, V_\veps) =  \lim_{\veps\to 0}
 \delta_{\cX} (t_{a,r+\veps}, V_\veps) =  \delta_{\cX}  (t_{a,r}, V_\veps) = r\, ,
$$
$\forall \, i \in J$.
We conclude that $\forall \, \sigma >0$, $\exists \, \veps > \veps_0$ such that
$$
\wtilde R (\eta,\Sigma) \geq  R  (t_{a,r},V_\veps) - \sigma = \min (\wtilde R (t_{a,r},\Sigma), r) - \sigma =
\wtilde R (t_{a,r},\Sigma) - \sigma
\, ,
$$
$\forall \, \eta \in V_\veps$.

\par
The conclusion is that  $\eta \mapsto \wtilde R  (\eta,\Sigma )$ is LSC at $t_{a,r}$. Since in the present case, $\eta \mapsto \delta_{\cX}(\eta,U)$ is continuous, we conclude that $\eta \mapsto R  (\eta,U)$ is LSC at $t_{a,r}$.
Since we already know that it is USC, we conclude that it is actually
continuous at $t_{a,r}$.
\end{proof}

\subsection{Continuity of $\xi \mapsto R_{\cX}(\xi,\Sigma)$ in $\dim X = 1$, when  $U $ is a neighborhood of $\eta_X$}
\label{onedimCONT}
We assume here that $\cX$ is a smooth formal scheme of relative dimension 1 over $\Spf \, \kc$, and $U$ is an affinoid neighborhood of the maximal point $\eta_X$ of $X = \cX_{\eta}$ (always satisfying the requirements in (\ref{situation})). Notice that the special fiber $\cX_s$ of $\cX$ is a smooth scheme over $\kt$, which we may assume to be connected. The reduction map $\pi : X \to \cX_s$, is such that the fiber at each closed point of $\cX_s$ is an open disk of radius one, called {\it a residue class}, while the inverse image of the generic point $\eta_{\cX_s}$ consists only of the maximal point $\eta_X$ of $X$. An affinoid $U \subset X$ is a neighborhood of $\eta_X$ if and only if it contains almost all residue classes in $X$, and contains a non trivial annulus of outer radius one in each of the remaining residue classes.
So, $U$ is the disjoint union of the generic fiber $Y = \cY_{\eta}$ of a smooth formal scheme $\cY$ (the union of full residue classes) and of a finite number $\{ C_1, \dots,C_r \}$ of analytic subdomains of the open disk of radius one, which are bound to contain some annulus of outer radius one.
We are given the system (\ref{diffCHDW}) on $U$, and must prove continuity of $\xi \mapsto R_{\cX}(\xi,\Sigma)$ on $U$. Notice that, if we call $\Sigma_{|Y}$, $\Sigma_{|C_1}$, \dots, $\Sigma_{|C_r}$, the restrictions of $\Sigma$ to the various analytic subdomains of $U$, we have
$$R_{\cY}(\xi,\Sigma_{|Y}) = R_{\cX}(\xi,\Sigma) \; , \; \forall \xi \in Y \; ,
$$
because $\delta_{\cX}(\xi,Y) = \delta_{\cY}(\xi,Y) = 1$, $\forall \, \xi \in Y$.
Similarly,
$$R_{\cY}(\xi,\Sigma_{|C_i}) = R_{\cX}(\xi,\Sigma) \; , \; \forall \xi \in C_i \; , \; \forall \, i =1,\dots,r \;.
$$
We already proved continuity of $\xi \mapsto R_{\cY}(\xi,\Sigma_{|Y})$ as a function $Y \to \R$, at  the maximal point $\eta_Y = \eta_X$. Notice that $R_{\cY}(\xi,\Sigma_{|Y}) = \min (1, \wtilde R(\xi,\Sigma))$, since $\delta_{\cY}(\xi,Y)=1$. Continuity on $Y$ then follows from theorem (\ref{thm:onedimCONT}), since the residue classes in $Y$ may be considered independently, except for being glued at $\eta_Y$, and the definition of $R_{\cY}(\eta_Y,\Sigma_{|Y}) = \min (1, \wtilde R(\eta_Y,\Sigma))$ only depends upon $\Sigma$ viewed as a differential system on $\sH(\eta_Y)^{\mu} = \sH(\eta_X)^{\mu}$. As for the residue classes containing $C_1$, \dots, $C_r$, respectively, let us fix one of them, which we regard as $D(0,1^-) \supset C_1$. Notice that $C_1$ contains some open annulus of outer radius 1. Continuity of  $\xi \mapsto R_{\cX}(\xi,\Sigma_{|C_1})$ on $C_1$ also follows from (\ref{thm:onedimCONT}).
We have to prove that
\beq
\lim_{\xi \to \eta_X}  R_{\cX}(\xi,\Sigma_{|C_1}) = R_{\cX}(\eta_X,\Sigma) \; .
\eeq
Since $\delta_{\cX}(\xi,U)$ is continuous on $U$ and takes the value 1 at $\eta_X$,
restricting  to points $\xi$ of the form $t_{0,r}$, as $r \to 1$,  it follows  from (\ref{thm:ChDw}) that
\beq
\lim_{\xi \to \eta_X}    R_{\cX}(\xi,\Sigma_{|C_1}) = \min(1,\wtilde R(\eta_X,\Sigma)) \; ,
\eeq
which only depends upon $\eta_X$, and is therefore independent of the residue class containing  $C_1$, chosen to approach $\eta_X$.
We conclude
\begin{cor} \label{eq:contLISCIA} If $U$ is an affinoid neighborhood of $\eta_X$, $\xi \mapsto R(\xi,\Sigma)$ is continuous on $U$.
\end{cor}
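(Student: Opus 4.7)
The plan is to decompose $U$ into pieces on which continuity is already known, then patch at the only point where they meet, namely $\eta_X$. First, using the standard description of an affinoid neighborhood of the generic point of a smooth one-dimensional formal scheme, I write $U$ as the disjoint union of $Y = \cY_\eta$ (the union of all full residue classes contained in $U$, where $\cY$ is a smooth open formal subscheme of $\cX$) and finitely many subdomains $C_1,\dots,C_r$, each sitting inside an open residue disk and each containing an annulus of outer radius $1$. Because $\delta_{\cX}(\xi,Y)=\delta_{\cY}(\xi,Y)=1$ for $\xi\in Y$ and $\delta_{\cX}(\xi,C_i)$ agrees with the diameter computed inside the ambient open unit disk, the function $\xi\mapsto R_{\cX}(\xi,\Sigma)$ restricted to each of these pieces coincides with the radius of convergence of the appropriate restriction of $\Sigma$ to a Laurent/affinoid domain of dimension one in $\what\A^1_\kc$.

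Next, I invoke Theorem \ref{thm:onedimCONT} on each of $C_1,\dots,C_r$ (which are affinoids in the open unit disk $D(0,1^-)$ of $\what\A^1_\kc$) to obtain continuity of $\xi\mapsto R_{\cX}(\xi,\Sigma_{|C_i})$ on $C_i$. For $Y$, the same theorem (applied residue class by residue class, noting that each residue class in $Y$ is itself an affinoid inside a standard open unit disk once we centre it) gives continuity on $Y\setminus\{\eta_Y\}$, while Corollary \ref{eq:contGEN} gives continuity of $\xi\mapsto R_{\cY}(\xi,\Sigma_{|Y})$ at the unique maximal point $\eta_Y=\eta_X$ of $Y$. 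Since $R_{\cY}(\xi,\Sigma_{|Y})=R_{\cX}(\xi,\Sigma)$ on $Y$, continuity on the whole $Y$ follows.

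The key step, and the only place where a genuine argument is still needed, is to show
\[
\lim_{\xi\to\eta_X,\;\xi\in C_i} R_{\cX}(\xi,\Sigma)\;=\;R_{\cX}(\eta_X,\Sigma)
\]
for each $i$. The topology of $X$ at $\eta_X$ is such that it suffices to test this limit along the segment of points $t_{0,r}\in C_i$ (in suitable coordinates centring the residue disk containing $C_i$ at $0$) as $r\to 1^-$. On such a segment the Christol--Dwork continuity result (Theorem \ref{thm:ChDw}), applied to the annulus of outer radius $1$ contained in $C_i$, together with the continuity of $\delta_{\cX}$ on $U$ (which takes value $1$ at $\eta_X$), yields
\[
\lim_{r\to 1^-}R_{\cX}(t_{0,r},\Sigma_{|C_i})\;=\;\min\bigl(1,\wtilde R(\eta_X,\Sigma)\bigr)\;=\;R_{\cX}(\eta_X,\Sigma).
\]
The right hand side depends only on the point $\eta_X$ (equivalently on $\sH(\eta_X)$), so this limit is the same for every $C_i$ and for approaches inside $Y$, which patches the pieces together.

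The main obstacle is precisely this compatibility at $\eta_X$: on each separate piece one has upper semicontinuity plus lower semicontinuity from Theorem \ref{thm:onedimCONT}, but making the one-sided limits from different residue classes agree at $\eta_X$ requires identifying each of them with the intrinsic quantity $\min(1,\wtilde R(\eta_X,\Sigma))$, which is exactly what Christol--Dwork delivers once we know $\delta_{\cX}$ is continuous and equals $1$ at $\eta_X$.
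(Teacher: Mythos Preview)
Your proof is essentially identical to the paper's: the same decomposition $U = Y \sqcup C_1 \sqcup \cdots \sqcup C_r$, continuity on each piece via Theorem~\ref{thm:onedimCONT} and Corollary~\ref{eq:contGEN}, and the patching at $\eta_X$ using Christol--Dwork (Theorem~\ref{thm:ChDw}) along the segment $t_{0,r}$ together with the observation that the limiting value $\min(1,\wtilde R(\eta_X,\Sigma))$ depends only on $\eta_X$. One small slip is that the $C_i$ (and the individual residue classes in $Y$) are analytic subdomains of an open unit disk rather than affinoids, so Theorem~\ref{thm:onedimCONT} is really being applied to an exhaustion by closed subdisks; the paper is equally informal on this point and on why testing the limit along the segment suffices.
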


\section*{Appendix.
Valuation polygon of an analytic function in several
variables}
\addcontentsline{toc}{section}{Appendix. Valuation polygon of an analytic function in several
variables}
\label{sec:poligono di valutazione}

For the reader's convenience we recall some facts from
\cite[\S2]{RobbaBSMF} on the several variable Newton polygon theory,
which has been applied in this paper.
\par

We set
$v(x)=-\log |x|$ for any $x$ in an extension of $k$. Let us assume that $k$ is algebraically closed. For any convergent power
series $f=\sum_{\ul\a\in\N^d}a_{\ul\a}\ul x^{\ul\a}\in  k \[[\ul
x\]]= k\[[x_1,\dots,x_d\]]$, \ie for a formal power series such that
$$
\liminf_{n\to\infty}\frac{v(a_{\ul\a})}{|\ul\a|_\infty}>-\infty\,,
$$
we set:
\begin{eqnarray}
&{\rm Conv}(f)=\l\{\ul\mu\in\R^d:v(a_{\ul\a})+\sum{\a_i\mu_i}\to+\infty
\hbox{ when } \sum\a_i\to +\infty\r\}\,;\\ \nonumber\\
&\ds v(f,\ul\mu)=\inf_{\ul\a\in\N^d,f_{\ul\a}\neq
0}\l(v(f_{\ul\a})+\sum_i\a_i\mu_i\r)\,,
\hbox{ for any $\ul\mu\in {\rm Conv}(f)$;}\\\nonumber\\
&{\rm Reg}(f)=\l\{\ul\mu\in {\rm Conv}(f):\exists!\ul\be\in\N^d \hbox{ s.t. }
v(f,\ul\mu)=v(a_{\ul\be})+\sum_i\be_i\mu_i\r\}\,;\\\nonumber\\
&{\rm Z}(f)={\rm Conv}(f)\smallsetminus {\rm Reg}(f)\,.
\end{eqnarray}
Then the following properties hold:
\begin{enumerate}

\item
${\rm Conv}(f)$ is a convex subset of $\R^d$;

\item
$v(f,-)$ is a concave continuous function on ${\rm Conv}(f)$;

\item
the graph of $v(f,-)$ on the interior of ${\rm Conv}(f)$ is a polyhedron
(with possibly infinitely many faces).

\end{enumerate}

\begin{prop}[{\cf \cite[2.12,2.20]{RobbaBSMF}}]~~
\begin{enumerate}
\item
Let $\ul \xi\in k^d$ and $\ul\mu=(v(\xi_1),\dots,v(\xi_d))$ be in
${\rm Conv}(f)$. If $\ul\mu\in {\rm Reg}(f)$ then $f(\ul\xi)\neq 0$ and
$v(f(\ul\xi))=v(f,\ul\mu)$.

\item
Let $\ul\mu\in {\rm Z}(f)\cap v(k)^d$. Then there exists $\ul\xi\in
k^d$ such that $f(\ul\xi)=0$ and $v(\xi_i)=\mu_i$ for any
$i=1,\dots,d$.

\end{enumerate}
\end{prop}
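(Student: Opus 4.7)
For part (1), the assumption $\ul\mu \in {\rm Conv}(f)$ guarantees $v(a_{\ul\a}) + \sum_i \a_i \mu_i \to +\infty$ as $|\ul\a|_\infty \to \infty$, so for any $\ul\xi \in k^d$ with $v(\xi_i) = \mu_i$ the series $f(\ul\xi) = \sum_{\ul\a} a_{\ul\a} \ul\xi^{\ul\a}$ converges in $k$, with only finitely many terms having valuation below any given bound. Each term has valuation $v(a_{\ul\a}) + \sum_i \a_i \mu_i \geq v(f, \ul\mu)$ by definition, and regularity $\ul\mu \in {\rm Reg}(f)$ asserts the existence of a unique $\ul\be$ attaining the minimum. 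The non-archimedean strong triangle inequality then forces $v(f(\ul\xi)) = v(a_{\ul\be}) + \sum_i \be_i \mu_i = v(f, \ul\mu)$; in particular $f(\ul\xi) \neq 0$.

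For part (2), the plan is an inductive reduction to the classical one-variable Newton polygon theorem. Since $\mu_i \in v(k)$, choose $\pi_i \in k^\times$ with $v(\pi_i) = \mu_i$ and substitute $x_i = \pi_i y_i$; the transformed series $F(\ul y) = f(\pi_1 y_1, \dots, \pi_d y_d)$ satisfies $\ul 0 \in {\rm Z}(F)$, and any zero $\ul\xi$ of $F$ with $v(\xi_i) = 0$ produces a zero $(\pi_1 \xi_1, \dots, \pi_d \xi_d)$ of $f$ with the prescribed valuations. After multiplying $F$ by a suitable element of $k^\times$, we may further assume $v(F, \ul 0) = 0$, so all coefficients $A_{\ul\a}$ lie in $k^\circ$ with $\min v(A_{\ul\a}) = 0$. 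Because $\ul 0 \in {\rm Conv}(F)$ forces $v(A_{\ul\a}) \to \infty$ as $|\ul\a|_\infty \to \infty$, only finitely many $A_{\ul\a}$ are units, so the reduction $\tilde F \in \tilde k[\ul y]$ is a nonzero polynomial, and the hypothesis $\ul 0 \in {\rm Z}(F)$ translates to $\tilde F$ having at least two distinct monomials.

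Now induct on $d$. For $d = 1$, the Newton polygon of $F$ has a horizontal segment of positive length at height zero, and the classical one-variable Newton polygon theorem produces a zero $\xi \in k$ of $F$ with $v(\xi) = 0$. For $d \geq 2$, choose two distinct multi-indices $\ul\a^{(1)}, \ul\a^{(2)}$ supporting $\tilde F$; they differ in some coordinate, which we permute to be the last. Writing
\[
\tilde F(\ul y) = \sum_{a_d \geq 0} \tilde P_{a_d}(y_1, \dots, y_{d-1})\, y_d^{a_d},
\]
we obtain two distinct values $a_d^{(1)} \neq a_d^{(2)}$ with $\tilde P_{a_d^{(j)}} \not\equiv 0$. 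Since $\tilde k$ is infinite (it is algebraically closed), pick $\tilde\xi' \in (\tilde k^\times)^{d-1}$ outside the union of the zero loci of $\tilde P_{a_d^{(1)}}$ and $\tilde P_{a_d^{(2)}}$, and lift it to $\xi' \in (k^\circ)^{d-1}$ with $v(\xi_i) = 0$. The specialization $g(y_d) := F(\xi', y_d)$ is a convergent power series whose coefficient of $y_d^{a_d^{(j)}}$ reduces to $\tilde P_{a_d^{(j)}}(\tilde\xi') \neq 0$, so $v(g, 0) = 0$ is attained at two distinct indices and $0 \in {\rm Z}(g)$. The one-variable case then yields $\xi_d \in k$ with $v(\xi_d) = 0$ and $g(\xi_d) = 0$, completing the required zero $\ul\xi = (\xi_1, \dots, \xi_d)$ of $F$.

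The principal subtlety is ensuring that the partial specialization $\ul y' \mapsto \xi'$ keeps the restricted series in the zone ${\rm Z}$ rather than in the regularity locus ${\rm Reg}$; this is the reason for choosing two multi-indices of $\tilde F$ differing in the last coordinate, and for demanding a lift $\xi'$ generic enough not to annihilate either coefficient polynomial $\tilde P_{a_d^{(j)}}$. The base case rests only on the standard one-variable Newton polygon correspondence between horizontal segments and zeros of a given valuation, which is Robba's starting point in \cite{RobbaBSMF}.
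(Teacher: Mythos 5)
The paper does not prove this proposition; it is stated with a reference to Robba \cite[2.12, 2.20]{RobbaBSMF} and no argument is given, so there is no in-paper proof to measure you against. Your argument is correct and is the natural one.

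Part (1): the one thing that must be made explicit, and you do, is that $\ul\mu\in{\rm Conv}(f)$ turns the infimum defining $v(f,\ul\mu)$ into an attained minimum; with $\ul\be$ the \emph{unique} minimizer, every other term $a_{\ul\a}\ul\xi^{\ul\a}$ has valuation \emph{strictly} larger than $v(f,\ul\mu)$ (not merely $\geq$), and since the values tend to $+\infty$ the infimum over $\ul\a\neq\ul\be$ is itself an attained minimum, hence strictly $> v(f,\ul\mu)$. The ultrametric inequality then gives $v(f(\ul\xi)) = v(f,\ul\mu) < \infty$.

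Part (2): your reduction is clean and the details check out. The rescaling $x_i = \pi_i y_i$ is legitimate because $\mu_i\in v(k^\times)$, and $v(F,\ul 0)$ lies in $v(k^\times)$ (it is a minimum, attained at some $\ul\a_0$, so it equals $v(A_{\ul\a_0})$), so the normalization $v(F,\ul 0)=0$ by a scalar in $k^\times$ is possible. Since $\ul 0\in{\rm Conv}(F)$ forces $v(A_{\ul\a})\to\infty$, only finitely many coefficients are units, $\tilde F\in\tilde k[\ul y]$ is a nonzero polynomial, and $\ul 0\in{\rm Z}(F)$ says it has at least two monomials. Picking $\ul\a^{(1)}\neq\ul\a^{(2)}$ differing in the last coordinate, the polynomials $\tilde P_{a_d^{(1)}}$, $\tilde P_{a_d^{(2)}}$ are nonzero, and as $\tilde k$ is algebraically closed (hence infinite) the complement of their zero loci together with the coordinate hyperplanes is nonempty; a unit lift $\xi'$ gives $g(y_d)=F(\xi',y_d)\in k\{y_d\}$ (tail convergence follows again from $v(A_{\ul\a})\to\infty$), with $c_{a_d^{(j)}}\in (k^\circ)^\times$ for $j=1,2$ since $\tilde c_{a_d^{(j)}}=\tilde P_{a_d^{(j)}}(\tilde\xi')\neq 0$. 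So $0\in{\rm Z}(g)$, and the one-variable Newton polygon theorem over algebraically closed complete $k$ produces $\xi_d$ with $v(\xi_d)=0$, $g(\xi_d)=0$, completing the zero of $f$. The ``induction on $d$'' is really a direct reduction to $d=1$, but that is harmless. No gap.
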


\begin{cor}
\label{cor:Robba} Let $\ul \xi\in k^d$ with
$\ul\mu=(v(\xi_1),\dots,v(\xi_d))\in Conv(f)$. We suppose that
$|f(\ul\xi)|< |f(\ul 0)|$ (resp. $|f(\ul\xi)|>|f(\ul 0)|$). Then there
exists $\ul\zeta\in  k^d$ such that $f(\ul\zeta)=0$ and
$|\zeta_i|=|\xi_i|$ (resp. $|\zeta_i|\leq|\xi_i|$) for any
$i=1,\dots,d$.
\end{cor}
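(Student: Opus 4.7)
The plan is to deduce both parts of the corollary from part (2) of the preceding proposition, by exhibiting in each case a point $\ul\mu^\p \in {\rm Z}(f) \cap v(k)^d$ whose coordinates give the required bounds on $|\zeta_i|$.

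First I would treat the case $|f(\ul\xi)| < |f(\ul 0)|$ by showing that $\ul\mu$ itself already lies in ${\rm Z}(f)$. Indeed, if instead $\ul\mu \in {\rm Reg}(f)$, part (1) of the proposition gives $v(f(\ul\xi)) = v(f,\ul\mu)$; but by definition of the infimum,
$$
v(f,\ul\mu) \leq v(a_{\ul 0}) + \sum_i 0\cdot \mu_i = v(a_{\ul 0}) = v(f(\ul 0)),
$$
so $|f(\ul\xi)| \geq |f(\ul 0)|$, contradicting the hypothesis. Since $\ul\xi \in k^d$ forces $\ul\mu \in v(k)^d$, part (2) of the proposition produces $\ul\zeta \in k^d$ with $f(\ul\zeta) = 0$ and $v(\zeta_i) = v(\xi_i)$, \ie $|\zeta_i| = |\xi_i|$.

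For the case $|f(\ul\xi)| > |f(\ul 0)|$, if $\ul\mu \in {\rm Z}(f)$ the same argument applies with $\ul\mu^\p = \ul\mu$; otherwise $\ul\mu \in {\rm Reg}(f)$, so $v(f,\ul\mu) = v(f(\ul\xi)) < v(a_{\ul 0})$. I would then translate along the ray $\ul\mu + t(1,\dots,1)$ for $t \geq 0$ and study the concave function
$$
g(t) = v(f,\ul\mu + t(1,\dots,1)) = \inf_{\ul\a\in\N^d,\, a_{\ul\a}\neq 0} L_{\ul\a}(t),\qquad L_{\ul\a}(t) = v(a_{\ul\a}) + \sum_i \a_i\mu_i + t|\ul\a|_\infty.
$$
Since $\ul\mu \in {\rm Conv}(f)$, only finitely many $\ul\a$ satisfy $L_{\ul\a}(0) < v(a_{\ul 0})$; for each such $\ul\a \neq \ul 0$ the affine function $L_{\ul\a}$ meets the horizontal level $v(a_{\ul 0})$ at the unique
$$
t_{\ul\a} = \frac{v(a_{\ul 0}) - L_{\ul\a}(0)}{|\ul\a|_\infty} > 0.
$$
Setting $t_* = \max_{\ul\a} t_{\ul\a}$ (a finite, positive maximum, since $g(0) < v(a_{\ul 0})$ forces at least one such $\ul\a$), one obtains $g(t_*) = v(a_{\ul 0})$, realized both by $\ul\a = \ul 0$ and by some $\ul\a_* \neq \ul 0$. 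Hence $\ul\mu^\p := \ul\mu + t_*(1,\dots,1) \in {\rm Z}(f)$, and $\mu^\p_i = \mu_i + t_* \geq \mu_i$ for every $i$.

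The point requiring care, and the main obstacle in the argument, is the verification that $\ul\mu^\p \in v(k)^d$: since $k$ is algebraically closed, $v(k)$ is a divisible subgroup of $\R$, so each $t_{\ul\a}$ lies in $v(k)$; hence $t_* \in v(k)$ and $\ul\mu^\p \in v(k)^d$. Part (2) of the proposition then produces $\ul\zeta \in k^d$ with $f(\ul\zeta) = 0$ and $v(\zeta_i) = \mu_i + t_* \geq v(\xi_i)$, \ie $|\zeta_i| \leq |\xi_i|$, as required.
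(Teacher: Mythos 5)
Your argument for the case $|f(\ul\xi)|<|f(\ul 0)|$ matches the paper's: both show $\ul\mu\in{\rm Z}(f)$ by observing that $\ul\mu\in{\rm Reg}(f)$ would force $v(f(\ul\xi))=v(f,\ul\mu)\leq v(a_{\ul 0})$, contradicting the hypothesis, and then invoke part (2) of the preceding proposition. For $|f(\ul\xi)|>|f(\ul 0)|$, however, you take a genuinely different route. The paper disposes of this case in one line, by re-centering: setting $g(\ul x)=f(\ul\xi+\ul x)$ one has $|g(-\ul\xi)|=|f(\ul 0)|<|f(\ul\xi)|=|g(\ul 0)|$, so the first case applied to $g$ yields $\ul\zeta^\p$ with $g(\ul\zeta^\p)=0$ and $|\zeta^\p_i|=|\xi_i|$, and $\ul\zeta=\ul\xi+\ul\zeta^\p$ works by the ultrametric inequality. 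Your argument instead constructs a point of ${\rm Z}(f)$ directly, sliding from $\ul\mu$ along the diagonal ray $\ul\mu+t(1,\dots,1)$ and exploiting the finite piecewise-affine structure of $v(f,-)$ to locate the first $t_*>0$ at which the constant affine form attached to $\ul\a=\ul 0$ ties with a nonzero index. This is correct; the cost is that you must verify $\ul\mu^\p\in v(k)^d$ (which you handle via divisibility of $v(k)$ for $k$ algebraically closed) and, implicitly, that $\ul\mu+t(1,\dots,1)$ stays in ${\rm Conv}(f)$ for $t\geq 0$ (clear, since raising each $\mu_i$ only reinforces the divergence condition defining ${\rm Conv}(f)$). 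The paper's re-centering trick avoids both checks by recycling the first case, at the milder cost of tacitly using that $\ul\mu$ lies in the convergence domain of the translated series.
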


\begin{proof}
Let us suppose that $|f(\ul\xi)|<|f(0)|$. This means that
$v(f_{(0,\dots,0)})<v(f(\xi))$ and hence that
$$
\inf_{\ul\a\in\N^d,f_{\ul\a}\neq
0}\l(v(f_{\ul\a})+\sum_i\a_i\mu_i\r)=
v(f_{\ul\a^\p})+\sum_i\a_i^\p\mu_i=v(f_{\ul\a^{\p\p}})+\sum_i\a_i^{\p\p}\mu_i\,,
$$
for some $\ul\a^\p,\ul\a^{\p\p}\in\N^d$ such that
$\ul\a^\p\neq\ul\a^{\p\p}$. Therefore $\ul\mu\in {\rm Z}(f)$ and the
corollary follows from the previous proposition.
\par
If on the contrary $|f(\ul\xi)|>|f(\ul 0)|$ it is enough to consider the
expansion of $f$ at $\ul\xi$.
\end{proof}


\end{document}